\let\old@setaddresses\@setaddresses
\def\@setaddresses{\bigskip{\parindent 0pt\let\scshape\relax\let\ttfamily\relax\old@setaddresses}}
\newcommand{\moment}{\mathfrak{M}}
\newcommand{\momenttwo}{\mathfrak{M}^{(2)}}
\newcommand{\nf}[2]{\nicefrac{#1}{#2}}
\newcommand{\alives}{\mathcal N}
\newtheorem{thm}{Theorem}[section]
\newtheorem{corollary}[thm]{Corollary}
\newtheorem{proposition}[thm]{Proposition}
\newtheorem{lemma}[thm]{Lemma}
\newtheorem{assu}{Assumption}
\theoremstyle{remark}
\newtheorem{remark}{Remark}
\theoremstyle{definition}
\begin{document}
\title[CLT for branching processes]{Central limit theorems for branching processes under mild assumptions on the mean semigroup}
\author{Bertrand Cloez$^1$ and Nicolás Zalduendo$^1$}
\footnotetext[1]{MISTEA, Université de Montpellier, INRAE, Institut Agro, Montpellier, France}


\begin{abstract}

We establish central limit theorems for a large class
of supercritical branching Markov processes in infinite dimension with
spatially dependent and non-necessarily local branching mechanisms.
This result relies on a fourth moment assumption and the exponential convergence of the mean semigroup in a weighted total variation norm. This latter assumption is pretty weak and does not necessitate symmetric properties or specific spectral knowledge on this semigroup.
In particular, we recover two of the three known regimes (namely the small and critical branching processes) of convergence in known cases, and extend them to a wider family of processes.
To prove our central limit theorems, we use the Stein's method,  which in addition allows us to newly provide a rate of convergence to this type of convergence.

\end{abstract}

\maketitle

\tableofcontents

\section{Introduction}

A branching Markov process on some state space $\mathcal X$, also called branching particle system, is heuristically defined as follows: we start at time $t=0$ with a particle evolving in $\mathcal X$ following a given Markovian dynamics; when the particle is located in $x\in \mathcal X$, it dies at rate $b(x)$, where $b:\mathcal X\longrightarrow \RR_+$ is a positive function; when it dies, the particle is replaced by a random number of descendants; the position of the new particles may depend on the location of their parent at the time of their death; finally, each new particle evolves independently following the same dynamics as their parent. See for instance  \cite{li2011measure} for an introduction to this model. 

We study such processes in the supercritical case; that is, when there is a positive probability for the process to survive indefinitely. Under appropriate moment conditions it is known that,  up to a scaling, the empirical measure of such particle system converges to some fixed measure (see for instance \cite{chen2007limit,englander2010strong,BansBerClo}). In this article, we are interested into studying the fluctuations around this limit, namely, the development of a central limit theorem associated to this law of large numbers result.  This question has already been addressed for the case where the state space $\mathcal X$ is a finite set, both in discrete time (see \cite{kesten1966limit,kesten1966additional}) and in continuous time (see \cite{athreya1969limit,athreya1969limitII,athreya1971some,janson2004functional}), where three different regimes of convergences where found depending on spectral properties of the mean semigroup of the process. These three regimes are called the small, critical and large branching cases, and a convergence of the fluctuations towards a Gaussian limit is proven in the two first cases.
Surprisingly, these results only have recently been extended to infinite dimension. In \cite{adamczak2015clt}, it is assumed that each particle evolves as an Ornstein-Uhlenbeck process between binary local branching events. There, the same three regimes of convergences where proven. In recent years, the extension of these result to more general frameworks has attracted a lot of attention. The first attempt to extent these result to general branching processes came with  \cite{ren2014central}, where a process where the reproduction occurs locally is considered. This means that the position of every descendant at the time of their birth is the same position where their predecessor died. In this setting, and under additional assumptions that imply that the infinitesimal generator of the process is a compact self-adjoint operator acting on $L^2(\mathcal X)$, the convergences on the three regimes was proven. By means of a Jordan decomposition of the infinitesimal generator, these results were then extended to the local non-symmetric case in \cite{ren2017central}. 
These setting exclude natural examples such as, for instance, age-structured models \cite{bansaye2020ergodic}, growth-fragmentation models \cite{bansaye2022non} or mutation type models \cite{cloez2024fast}.
Finally, not unrelated, we can cite \cite[Proposition 6.4]{BDMT} which describes a different central limit theorem in long time around fixed time intervals.

Using a different approach, we extend these results into a more general framework including such examples. Indeed, we prove Gaussian fluctuations for this type of processes in case when it is expected to hold.

This paper is organised as follows: in Section~\ref{sec:def} we formally define the process, we present the main assumptions and we state the main results of this article (Theorems~\ref{thm:CLT_Martingale}, \ref{thm:CLT_SBC} and \ref{thm:CLT_CBC}), which are proved in Section~\ref{sec:main_proof}. The proof of our results is based in Stein's method, for which we give a brief overview in Subsection~\ref{sec:Stein}. Prior to that, in Section~\ref{sec:martingale} we study the asymptotic behaviour of the moments of the process and prove a law of large numbers. Finally, in Section~\ref{sec:applications}, we apply our results to the \textit{house of cards} model.

Throughout this article, we assume that all the random variables that we use are defined on a given probability space $(\Omega, \mathcal F, \mathbb P)$ and we denote by $\mathbb E$ the expectation associated with $\mathbb P$. The set $\NN$ will denote the set of non-negative integers, and $\NN^*=\NN\setminus \{0\}$. For $p\in \NN^*$, we denote by $L^p(\PP)$ the space of $p-$integrable random variables with respect to $\PP$. 
\section{Assumptions and main results}\label{sec:def}

In this work, we consider a branching Markov process $(Z_t)_{t\geq 0}$  on a measure space $(X,B(\mathcal X), \mu)$. This correspond to a point measure valued process satisfying the branching property. Namely, on the first hand, we  have
\[\forall t\geq 0, \, Z_t := \sum_{u\in \alives_t} \delta_{X_t^u},\]
where $\mathcal N_t$ stands for the individuals $u\in \mathcal U$ who are alive at time $t$, and $X_t^u\in \mathcal X$ stands for the trait of the individual $u \in \mathcal N_t$. The genealogy (namely a precise description of sets $\mathcal N_t$) can be encoded by the well-known Ulam notation; see for instance \cite{le2005random} for details. On the second hand, process $(Z_t)_{t\geq 0}$ verifies the branching property. This reads
\[ \forall t,s\geq 0, \quad Z_{t+s} = \sum_{u\in \alives_t} Z_{t,t+s}^u = \sum_{u\in \alives_t}\underbrace{\sum_{v\in \alives_{t,t+s}^u}\delta_{X_{t+s}^v}}_{Z_{t,t+s}^u},\]
where processes $(Z_{t,t+s}^{u})_{s\geq 0}$ are independent conditioned on $\mathcal F_t :=\sigma (Z_r,\ r\leq t)$. From a modelling point of view, $\alives_{t,t+s}^u$ stands for the individuals who are descendants of $u$ (itself included).

 This branching property implies that we can assume that the process is initiated with only one particle (namely $Z_0$ is a Dirac mass). In this case, for $x\in \mathcal X$,  we write $\mathbb P_x$ for the law of the process conditioned on $\{Z_0=\delta_x\}$. We also write $\mathbb E_x$ for the expectation associated to $\PP_x$.

Our main results rely on two different types of assumptions, an assumption of moments and an ergodicity assumption. Both of these assumptions will depend on a function $V: \mathcal X \longrightarrow (0,+\infty)$, that will be fixed for the rest of the paper. With the use of this function we also define for $k\in \NN^*$
\[\mathcal B(V^k) := \left\{f:\mathcal X \longrightarrow \RR,\ \sup_{x\in \mathcal X} \dfrac{|f(x)|}{V^k(x)}<+\infty\right\},\]
and for $f\in \mathcal B(V^k)$, we set
\[\|f\|_{V^k} :=\sup_{x\in \mathcal X} \dfrac{|f(x)|}{V^k(x)}.\]
We also denote $\mathcal M(V^k)$ (resp. $\mathcal M_+(V^k)$) the space of (signed) measures (resp. positive measures) $\mu$ defined on $B(\mathcal X)$ such that $\mu(V^k):= \int_{\mathcal X}V^kd\mu$ is finite.

We begin by an assumption on moment of order $\kappa \in \mathbb N^*$, which depending of the results, can be applied to different values of $\kappa$. Moreover, this assumption is local but may be extended to more general time with Proposition~\ref{prop:moments}.

\begin{assu}\label{assu:moments}
For some $\kappa \in \NN^*$, there exist $t^*>0$ and a constant $a_1>0$ such that for all $x\in \mathcal X$ and $k\leq \kappa$,
\[ \sup_{t \leq t^*} \mathbb E_x\left(Z_t(V)^k\right) \leq a_1V^k(x).\]
\end{assu}


Assumption~\ref{assu:moments} may be verified using simple drift conditions as \cite[Theorem 2.1]{meyn1993stability}. See for instance \cite{BansBerClo} in the context of branching processes.

When Assumption~\ref{assu:moments} holds we define, for $k\leq \kappa$, the family of operators $(\moment_t^{(k)})_{t\geq 0}$ given for $f\in \mathcal B(V)$ and $x\in \mathcal X$ by
\[\moment_t^{(k)}[f](x) = \mathbb E_x(Z_t(f)^k).\]


All along the article, we will assume that Assumption~\ref{assu:moments} holds for $\kappa=1$ and simplify the notation $\moment_t f:= \moment^{(1)}_t[f]$. We then have that $t\mapsto \moment_t V$ is finite and locally bounded in a neighbourhood of $t=0$. This condition, and the branching property, ensures that $(\moment_t)_{t\geq 0}$ forms a strongly continuous semigroup on $\mathcal B(V)$. We will assume geometric ergodicity on this semigroup. In order to state this assumption, we define for $\kappa \in \NN^*$ the functions 
\[\underline{V}^{(\kappa)}(x) = \min_{k\leq \kappa} V^k(x)\text{ and }\overline{V}^{(\kappa)}(x) = \max_{k\leq \kappa}V^k(x)\]

\begin{assu}\label{assu:eigenelements}
For some $\kappa \in \NN$, there exists a triplet $(\gamma, h , \lambda)\in \mathcal M_+(\overline{V}^{(\kappa)})\times \mathcal B_+(\underline{V}^{(\kappa)})\times \RR^*_+$ of eigenelements of the semigroup $\moment_t$, with $\gamma (h) = \|h\|_{V}=1$, a positive constant $a_2>0$ and $\rho>0$, satisfying for any $x\in \mathcal X$, all $k\leq \kappa$, any $f\in \mathcal B(V^k)$ and $t>0$,
\begin{equation*}
|e^{-\lambda t} \moment_t f(x) -\gamma(f)h(x)|\leq a_2e^{-\rho t}\|f\|_{V^k} V^k(x). 
\end{equation*}
\end{assu}

Assumption~\ref{assu:eigenelements} is a pretty mild assumption on the semigroup convergence. Our approach therefore covers the models in the articles \cite{bansaye2020ergodic, bansaye2022non, champagnat2023general, del2002stability,ferre2021more,sanchez2023krein, meyn1993stability,canizo2023harris, bertoin2019feynman, hairer2011yet} and on their numerous extensions from several authors.


We can now state a law of large numbers. This result has been proved under different assumptions in the literature (see \cite{chen2007limit,englander2010strong,BansBerClo})

\begin{thm}[Law of Large Numbers] \label{thm:LGN}
If Assumptions~\ref{assu:moments} and~\ref{assu:eigenelements} hold for some even $\kappa\in \NN^*$, then there exists a non-negative random variable $W$ such that for all $f\in \mathcal B(V)$, and for all $x\in\mathcal X$
\[e^{-\lambda t} Z_t(f) \xrightarrow[]{t\to +\infty}\gamma(f) W\text{ in }L^\kappa(\mathbb P_x).\]
\end{thm}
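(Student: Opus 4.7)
The plan is to decompose the test function $f$ along the eigendirection $h$ and to analyse separately a martingale part and a vanishing fluctuation part. Since $(\gamma,h,\lambda)$ are eigenelements, $\moment_t h = e^{\lambda t} h$, and combining this with the branching and Markov properties gives
\[\mathbb{E}_x(Z_t(h)\mid \mathcal F_s) \;=\; \sum_{u\in \mathcal N_s} \moment_{t-s} h(X_s^u) \;=\; e^{\lambda(t-s)}\, Z_s(h),\]
so that $M_t := e^{-\lambda t} Z_t(h)$ is a non-negative $(\mathcal F_t)$-martingale under $\mathbb{P}_x$ with constant mean $h(x)$.

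The next step is to upgrade the local moment bound of Assumption~\ref{assu:moments} to a global one of the form $\mathbb{E}_x(Z_t(V)^k) \leq C_k\, e^{k\lambda t} V^k(x)$ for every $k\leq \kappa$ and every $t\geq 0$; this follows from a standard semigroup iteration using the branching decomposition, which is precisely the role of the announced Proposition~\ref{prop:moments}. Since $h\in \mathcal{B}(V)$, this gives $\sup_{t\geq 0} \mathbb{E}_x(M_t^\kappa) < \infty$, and Doob's martingale convergence theorem produces a non-negative random variable $W$ such that $M_t\to W$ almost surely and in $L^\kappa(\mathbb{P}_x)$.

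For general $f\in \mathcal{B}(V)$, I would write $f = \gamma(f)\, h + g$ with $g := f - \gamma(f) h$, so that $\gamma(g) = 0$ since $\gamma(h)=1$. This leads to the decomposition
\[e^{-\lambda t} Z_t(f) - \gamma(f)\, W \;=\; \gamma(f)\,(M_t - W) \;+\; e^{-\lambda t} Z_t(g),\]
where the first term converges to $0$ in $L^\kappa$ by the preceding step. For the second one, Assumption~\ref{assu:eigenelements} applied to $g$ and the fact that $\gamma(g)=0$ provide the pointwise bound $|e^{-\lambda t}\moment_t g(x)|\leq a_2\, e^{-\rho t} \|g\|_V V(x)$, which controls the first moment. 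Evenness of $\kappa$ ensures $(e^{-\lambda t} Z_t(g))^\kappa \geq 0$, so convergence in $L^\kappa$ reduces to showing $\mathbb{E}_x([e^{-\lambda t} Z_t(g)]^\kappa) \to 0$. I would handle this by conditioning at an intermediate time $s \sim t/2$ and using the branching property to expand the $\kappa$-th moment as a sum of products of quantities of the form $\moment_{t-s}^{(j)}[g]$, each inheriting an exponential factor $e^{(\lambda-\rho)(t-s)}$ from the spectral gap, averaged against moments of $Z_s(V)$ of order at most $\kappa$ from the previous step.

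The main obstacle is this last step: the spectral gap $\rho$ only controls the expectation of $Z_t(g)$, and one must organise the combinatorial expansion of $\mathbb{E}_x(Z_t(g)^\kappa)$ in such a way that the exponential decay from the spectral gap dominates the supercritical growth $e^{k\lambda t}$ of the higher moment operators $\moment_t^{(k)}[V]$. Splitting the time interval at $s \sim t/2$ is what allows the two effects to be balanced, and will set the pattern for the more refined fluctuation estimates needed in the central limit theorems to follow.
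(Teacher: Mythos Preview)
Your overall architecture --- write $f=\gamma(f)h+g$ with $\gamma(g)=0$, handle the martingale $e^{-\lambda t}Z_t(h)$ via $L^\kappa$-boundedness from Proposition~\ref{prop:moments}, and then show $e^{-\lambda t}Z_t(g)\to 0$ in $L^\kappa$ --- is exactly the paper's, and the martingale part is correct.

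The gap is in the fluctuation step. Your claim that after splitting at $s\sim t/2$ each factor $\moment_{t-s}^{(j)}[g]$ in the multinomial expansion ``inherits an exponential factor $e^{(\lambda-\rho)(t-s)}$ from the spectral gap'' is only true for $j=1$; Assumption~\ref{assu:eigenelements} says nothing directly about $\moment_{t-s}^{(j)}[g]$ for $j\geq 2$, and the crude non-centered bound $|\moment_{t-s}^{(j)}[g]|\leq C e^{j\lambda(t-s)}V^j$ carries no decay. Terms in the expansion with no index equal to $1$ --- e.g.\ the block $(2,2)$ when $\kappa=4$ --- then contain no spectral-gap factor at all, and bounding the resulting $\sum_{u\neq v}V^2(X_s^u)V^2(X_s^v)$ by $Z_s(V)^4$ yields only $O(1)$ after scaling by $e^{-4\lambda t}$. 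Such terms \emph{can} be controlled, but by a different mechanism (they involve fewer than $\kappa$ particles at time $s$, so one needs the sharper estimate $\moment_s^{(m)}[V^j]\leq C e^{m\lambda s}V^{mj}$ for $mj\leq\kappa$), which your proposal does not mention. The paper sidesteps all of this by invoking the \emph{centered} half of Proposition~\ref{prop:moments}: an induction on $k$ (iterating a fixed small time step $t_0$, not a single midpoint split) establishes $|\moment_t^{(k)}[g](x)|\leq C e^{\beta k t}V^k(x)$ with $\beta<\lambda$ for every $k\leq\kappa$ whenever $\gamma(g)=0$, from which $e^{-\kappa\lambda t}\mathbb E_x(Z_t(g)^\kappa)\to 0$ is immediate. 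Your single split is essentially one step of that induction, but you still need the inductive hypothesis on the lower-order $\moment_{t-s}^{(j)}[g]$ to close the argument.
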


The main result of this article is the study of the fluctuations of the process around the random limit provided by Theorem~\ref{thm:LGN}. To do so, we start by recalling that a distance between the laws of two real random variables $X,Y$ can be expressed as
\begin{equation}
d_{\mathcal F}(X,Y) = \sup_{F\in \mathcal F}|\mathbb E(F(X)) - \mathbb E(F(Y))|,
\end{equation}
where $\mathcal F$ is a suitable family of test functions. In what follows, we will work with the family 
\[\mathcal F:=\left\{F:\RR \longrightarrow \RR,\ \max\{\|F\|_\infty, \|F'\|_\infty,\|F''\|_\infty,\|F'''\|_\infty\}\leq 1\right\},\]
and we will denote by $\textbf{d}$ the associated distance. One can easily check that if a sequence of random variables $(X_t)_{t\geq 0}$ is convergent in distance $\textbf{d}$, then it converges in law to the same limit (see for instance \cite[Lemma 5.1]{benaim2017ergodicity}).

\begin{thm}\label{thm:CLT_Martingale} If Assumptions~\ref{assu:moments} and~\ref{assu:eigenelements} hold for $\kappa \geq 4$, then there exists $\sigma_h>0$ such that
\[e^{-\nf{\lambda}{2}}(Z_t(h)-e^{\lambda t}W) \longrightarrow \sigma_h \sqrt{W}\mathcal Z\]
in distribution when $t$ goes to infinity, where $\mathcal Z$ is a standard Gaussian random variable that is independent of $W$. Moreover, we have the following estimate for the speed of convergence in distance \textbf{d}: there exists $K>0$ such that for all $t>0$ 
    \[\textbf{d}\left(e^{-\nf{\lambda t}{2}}(Z_t(h)-e^{\lambda t}W) , \sigma_{h} \sqrt{W}\mathcal Z\right)\leq K\exp\left(-\dfrac{\lambda \rho}{2\rho + \lambda }\right)\mathbb E(Z_0(V^*)),\]
    with $V^*(x)=\max\{\sqrt{V(x)},V^3(x)\}$. 

\end{thm}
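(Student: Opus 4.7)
The plan is to combine Stein's method with the branching property to obtain a quantitative CLT. Fix a splitting $t = s + r$, to be optimised at the end. Using the branching property at time $s$ together with the identity $e^{\lambda t}W = e^{\lambda r}\sum_{u\in\alives_s}W^u$, where the $W^u$ are the (conditionally independent) martingale limits issued from $X_s^u$, one obtains
\begin{equation*}
e^{-\nf{\lambda t}{2}}\bigl(Z_t(h) - e^{\lambda t}W\bigr) \;=\; e^{-\nf{\lambda s}{2}}\sum_{u\in\alives_s}\xi^u, \qquad \xi^u \;=\; e^{\nf{\lambda r}{2}}\bigl(M^u_r - W^u\bigr),
\end{equation*}
where $M^u_r := e^{-\lambda r}Z^u_{s,t}(h)$. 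Conditionally on $\mathcal F_s$, the $\xi^u$ are independent and centered (since $h$ is an exact eigenfunction, whence $\mathbb E_x[W]=h(x)$). A direct computation gives
\[\mathbb E\bigl[(\xi^u)^2 \,\big|\, X_s^u = x\bigr] \;=\; e^{-\lambda r}\moment_r[g](x), \qquad g(x) := \mathbb E_x[W^2] - h(x)^2,\]
which by Assumption~\ref{assu:eigenelements} converges at rate $e^{-\rho r}$ to $\gamma(g)h(x)$; this identifies $\sigma_h^2 = \gamma(g)$.

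I would then apply Stein's method to the conditionally independent sum $e^{-\nf{\lambda s}{2}}\sum_u\xi^u$, with target the mixed Gaussian $\sigma_h\sqrt{W}\,\mathcal Z$ characterised by $\mathbb E[\sigma_h^2 W\,F''(Y) - YF'(Y)] = 0$. Because the test functions in $\mathbf d$ have three bounded derivatives, the Stein solution has two bounded derivatives and a third-order Taylor expansion produces a bound of the form
\begin{equation*}
\mathbf d \;\lesssim\; e^{-\nf{3\lambda s}{2}}\sum_{u\in\alives_s}\mathbb E\bigl[|\xi^u|^3\,\big|\,\mathcal F_s\bigr] \;+\; \mathbb E\Bigl[\,\Bigl|e^{-\lambda s}\!\!\sum_{u\in\alives_s}\mathbb E[(\xi^u)^2\,|\,\mathcal F_s] - \sigma_h^2 W\Bigr|\,\Bigr].
\end{equation*}
The third-moment term is controlled via Assumption~\ref{assu:moments} at $\kappa = 4$ (using Cauchy--Schwarz to trade $|\xi^u|^3$ for $(\xi^u)^4$), yielding an $e^{-\nf{\lambda s}{2}}\,\mathbb E(Z_0(V^*))$ bound in which $V^*$ emerges from combining the third-moment bounds on $Z^u_{s,t}(h)$, on $W^u$, and on $\sqrt{W}$ coming from the Gaussian target. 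The variance discrepancy splits into the $O(e^{-\rho r})$ semigroup error (from the conditional variance computation above) plus the $L^1$ law-of-large-numbers error, itself bounded by $\|M_s - W\|_2 = O(e^{-\nf{\lambda s}{2}})$ via a direct second-moment identity.

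Balancing the two scales $e^{-\nf{\lambda s}{2}}$ and $e^{-\rho r}$ under the constraint $s + r = t$ yields the announced rate $\exp(-t\lambda\rho/(2\rho + \lambda))$. The main obstacle is the Stein step with a \emph{random} target variance $\sigma_h^2 W$: the Stein solution for $\sigma\sqrt{W}\mathcal Z$ must be analysed uniformly, with care on $\{W \approx 0\}$ where the Gaussian degenerates. A workable strategy is to apply the Stein identity conditionally on $W$ (classical Gaussian Stein), then integrate out $W$; the crucial technical check is that for test functions with three bounded derivatives the relevant Stein factors are bounded independently of the variance, which is what motivates the choice of the distance $\mathbf d$.
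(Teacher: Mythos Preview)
Your overall strategy---branching decomposition at time $s$, Stein's method on the conditionally independent centred sum, and balancing the third-moment term $e^{-\lambda s/2}$ against the variance-convergence rate $e^{-\rho r}$---is exactly the paper's approach, and your identification $\sigma_h^2=\gamma(g)$ with $g=\psi_\infty^{(2)}$ as well as the conditional variance formula $\mathbb E[(\xi^u)^2\mid X_s^u=x]=e^{-\lambda r}\moment_r[g](x)$ are correct.

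The one genuine gap is your handling of the random target variance. You propose to ``apply the Stein identity conditionally on $W$'', but $W$ is \emph{not} $\mathcal F_s$-measurable: since $W-W_s=e^{-\lambda s}\sum_{u\in\alives_s}(W^u-h(X_s^u))$, conditioning on $W$ imposes a linear constraint on the $W^u$ and hence on the $\xi^u$, destroying the conditional independence that the leave-one-out Taylor argument requires. Equivalently, if you instead write the Stein equation with variance $\sigma_h^2 W$, the Stein solution $G$ becomes a function of $W$, and the key factorisation $\mathbb E[\xi^u G(Y^{(-u)})\mid\mathcal F_s]=\mathbb E[\xi^u\mid\mathcal F_s]\,\mathbb E[G(Y^{(-u)})\mid\mathcal F_s]$ fails because $G$ carries all the $W^u$'s.

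The paper's remedy is to take the \emph{intermediate} target $\sigma_h\sqrt{W_s}\,\mathcal Z$, so that the (random) Stein solution $G_s$ is $\mathcal F_s$-measurable and the leave-one-out argument goes through cleanly; the variance-discrepancy term then reads $\bigl|e^{-\lambda s}\sum_u\mathbb E[(\xi^u)^2\mid\mathcal F_s]-\sigma_h^2 W_s\bigr|$, with no $W_s\to W$ error inside the Stein bound. The passage from $\sigma_h\sqrt{W_s}\,\mathcal Z$ to $\sigma_h\sqrt{W}\,\mathcal Z$ is done \emph{outside} the Stein step, by the direct Lipschitz estimate
\[
\bigl|\mathbb E[F(\sigma_h\sqrt{W_s}\,\mathcal Z)]-\mathbb E[F(\sigma_h\sqrt{W}\,\mathcal Z)]\bigr|\leq \|F'\|_\infty\,\sigma_h\,\mathbb E|\mathcal Z|\,\mathbb E\bigl|\sqrt{W_s}-\sqrt{W}\bigr|\lesssim e^{-\lambda s/4}\sqrt{V(x)}.
\]
This last step, not a conditioning on $W$, is the true origin of the $\sqrt{V}$ component of $V^*$. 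Your observation that the Stein factors in $\mathcal G$ are bounded independently of the variance is correct and is precisely what makes this intermediate-target trick costless.
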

The previous theorem can be stated even under more weakly assumptions on the convergence of the semigroup (see Remark~\ref{rem:weaker_assumption} after the proof of Theorem~\ref{thm:CLT_Martingale} in Section~\ref{sec:main_proof} for more details). 
The behaviour of the fluctuations associated top the limit in Theorem\ref{thm:LGN} for a general function $f\in \mathcal B(V)$ depends on the relation between $\lambda$ and $\rho$ in Assumption~\ref{assu:eigenelements}. In the following result, we provide a speed of convergence for the case $2\rho > \lambda$, known in the literature as the small branching case.

\begin{thm}[Small branching case]\label{thm:CLT_SBC} If Assumptions~\ref{assu:moments} and ~\ref{assu:eigenelements} hold for $\kappa\geq 4$ and $2\rho> \lambda$, then for all $f\in \mathcal B(V)$ there exists $\sigma_{f,s}> 0$ such that
    \begin{equation*}
        e^{-\nf{\lambda t}{2}}(Z_t(f) -e^{\lambda t}\gamma(f)W) \longrightarrow \sigma_{f,s}\sqrt{W} \mathcal Z
    \end{equation*}
    in distribution when $t$ goes to infinity, where $\mathcal Z$ is a standard Gaussian random variable that is independent of $W$. Moreover, we have the following estimate for the speed of convergence in distance \textbf{d}: for all $f\in \mathcal B(V)$ there exists $K>0$ such that for all $t>0$ 
    \[\textbf{d}\left(e^{-\nf{\lambda t}{2}}(Z_t(f)-e^{\lambda t}\gamma (f)W) , \sigma_{f,s}^2 \sqrt{W}\mathcal Z_1\right)\leq K\exp\left(\dfrac{\lambda(\lambda - 2\rho)}{2(\lambda + 2\rho)}t\right)\mathbb E(Z_0(V^*)),\]
    with $V^*(x)=\max\{\sqrt{V(x)},V^3(x)\}$.
\end{thm}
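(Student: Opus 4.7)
The strategy reduces Theorem~\ref{thm:CLT_SBC} to Theorem~\ref{thm:CLT_Martingale} via a decomposition of $f$ along the eigenfunction $h$, handling the orthogonal remainder by a conditional Stein argument. Set $g := f - \gamma(f) h$, so that $\gamma(g) = 0$ (since $\gamma(h) = 1$), and Assumption~\ref{assu:eigenelements} sharpens into the enhanced decay $|\moment_u g(x)| \leq a_2 e^{(\lambda - \rho)u}\|g\|_V V(x)$; the hypothesis $2\rho > \lambda$ is precisely what makes this strictly faster than $e^{\lambda u/2}$. Writing
\[e^{-\lambda t/2}\bigl(Z_t(f) - e^{\lambda t}\gamma(f) W\bigr) = \gamma(f)\, e^{-\lambda t/2}\bigl(Z_t(h) - e^{\lambda t} W\bigr) + e^{-\lambda t/2} Z_t(g),\]
Theorem~\ref{thm:CLT_Martingale} handles the first summand, leaving a CLT for $Y_t^g := e^{-\lambda t/2} Z_t(g)$, jointly with the martingale CLT, as the new task.

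For a splitting time $s \in (0,t)$ to be optimized, the branching property yields $Z_t(g) = \sum_{u \in \alives_s} Z^u_{s,t}(g)$, where conditionally on $\mathcal F_s$ the summands are independent with conditional means $\moment_{t-s} g(X_s^u)$ and conditional variances $\mathcal V_{t-s}(g)(X^u_s)$, where $\mathcal V_u(g) := \moment^{(2)}_u[g] - (\moment_u g)^2$. Decompose
\[Y_t^g = \underbrace{e^{-\lambda t/2} Z_s(\moment_{t-s} g)}_{\text{bias}} + \underbrace{e^{-\lambda t/2} \sum_{u \in \alives_s}\bigl(Z^u_{s,t}(g) - \moment_{t-s} g(X^u_s)\bigr)}_{\text{conditional fluctuation}}.\]
The bias is bounded in $L^1$ by $O(e^{\lambda t/2 - \rho(t-s)})$, using the sharpened bound on $\moment_{t-s} g$ together with $\mathbb E[Z_s(V)] = O(e^{\lambda s})$ from Assumption~\ref{assu:eigenelements} with $k=1$. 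The recursion $\mathcal V_u(g) = \moment_r \mathcal V_{u-r}(g) + \mathcal V_r(\moment_{u-r} g)$, iterated with Assumption~\ref{assu:eigenelements}, yields $e^{-\lambda u}\mathcal V_u(g) \to \Sigma_g \in \mathcal B_+(V^2)$ --- finiteness of $\Sigma_g$ relies precisely on $2\rho > \lambda$, otherwise the second term of the recursion integrates to infinity. By Theorem~\ref{thm:LGN} applied to $\Sigma_g$, $e^{-\lambda t} Z_s(\mathcal V_{t-s}(g)) \to W\gamma(\Sigma_g)$.

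The quantitative Stein bound for (conditionally) independent sums with a mixed Gaussian target (Subsection~\ref{sec:Stein}), controlled via the fourth-moment bound of Assumption~\ref{assu:moments} with $\kappa \geq 4$, then yields a conditional distance to $\mathcal N(0, W\gamma(\Sigma_g))$ of order $e^{-\lambda s/2}$ (the standard $|\alives_s|^{-1/2}$ rate with $|\alives_s| \sim e^{\lambda s}$). Balancing $e^{-\lambda s/2}$ with the bias $e^{\lambda t/2 - \rho(t-s)}$ gives the optimal $s = (2\rho - \lambda)t/(\lambda + 2\rho)$, at which both exponents equal $\lambda(\lambda - 2\rho)\, t / (2(\lambda + 2\rho))$, matching the claimed rate. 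The limiting variance decomposes as $\sigma_{f,s}^2 = \gamma(f)^2 \sigma_h^2 + 2\gamma(f)\sigma_{h,g} + \gamma(\Sigma_g)$, where $\sigma_{h,g}$ comes from a bilinear analogue of the $\mathcal V$ analysis applied to the pair $(h,g)$.

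The main obstacle is not the optimization of $s$ but the joint Gaussian identification: running the conditional Stein argument simultaneously on the martingale component $\gamma(f) e^{-\lambda t/2}(Z_t(h) - e^{\lambda t} W)$ and on the orthogonal component $Y_t^g$, so that their sum converges to a single Gaussian with the prescribed variance $\sigma_{f,s}^2 W$ rather than to a degenerate mixture. This requires applying the Stein machinery uniformly to linear combinations of the two components, together with a careful bilinear analysis of $\mathcal V_u$ to identify the cross-covariance $\sigma_{h,g}$ — and it is exactly in this step that the small-branching hypothesis $2\rho > \lambda$ is indispensable, since it is what guarantees that all three variance pieces remain finite.
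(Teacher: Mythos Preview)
Your strategy is viable but takes a detour that the paper avoids. The paper does \emph{not} decompose $f$ into $\gamma(f)h + g$ and treat the two pieces separately. Instead it works directly with the combined summands
\[
A^u_{t,t+r} := Z^u_{t,t+r}(f) - e^{\lambda r}\gamma(f)\,W^{u,t},
\]
where $W^{u,t}$ is the martingale limit along the branch of $u$, and applies Stein's method once to $Y_{t+r} = e^{-\lambda(t+r)/2}\sum_{u\in\alives_t} A^u_{t,t+r}$ with random target variance $\sigma_{f,s}^2 W_t$. The leave-one-out Taylor expansion then produces three error terms, controlled respectively by the bias $|\moment_r f - e^{\lambda r}\gamma(f)h|$, by $|e^{-\lambda r}\mathfrak F_r^{(2)}[f] - \sigma_{f,s}^2 h|$ (Lemma~\ref{lem:speed_fluctuations2}), and by $\mathfrak F_r^{(3)}[f]$ (Lemma~\ref{lem:fluctuations_order3}). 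The optimization over the split $t+r$ is the same as yours and yields the same rate.

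By working with $A^u_{t,t+r}$ the paper sidesteps your ``main obstacle'' entirely: there is no joint-convergence issue because there is only one Stein argument. Two remarks on your route. First, your cross term $\sigma_{h,g}$ is in fact zero: since $W$ is the closure of $(W_t)$, one has $\mathbb E\bigl[(W_t-W)Z_t(g)\,\big|\,\mathcal F_t\bigr]=0$, which is precisely why the decomposition~\eqref{eq:fluctuation_decomp} of $\mathfrak F_t^{(2)}[f]$ has no cross term. Second, once you attempt to resolve the joint convergence via Cram\'er--Wold by running Stein on linear combinations $\alpha\gamma(f)h + \beta g$, the choice $\alpha=\beta=1$ is exactly the paper's direct computation --- so your approach ultimately collapses into it after extra bookkeeping. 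The decomposition buys intuition (separating the martingale fluctuation from the orthogonal one) but no technical economy.
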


For the case $2\rho = \lambda$, we provide in the following result a central limit theorem. However, our mild assumptions do not allow us to estimate a precise speed of convergence in this case.

\begin{thm}[Critical branching case]\label{thm:CLT_CBC}If Assumptions~\ref{assu:moments} and ~\ref{assu:eigenelements} hold for $\kappa\geq 4$ and $2\rho= \lambda$, then for all $f\in \mathcal B(V)$ there exists $\sigma_{f,c}\geq 0$ such that
    \begin{equation*}
       t^{-\nf{1}{2}} e^{-\nf{\lambda t}{2}}(Z_t(f) -e^{\lambda t}\gamma(f)W) \longrightarrow \sigma_{f,c}\sqrt{W} \mathcal Z
    \end{equation*}
    in distribution when $t$ goes to infinity, where $\mathcal Z$ is a standard Gaussian random variable that is independent of $W$.
\end{thm}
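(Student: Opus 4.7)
My plan is to mirror the Stein's method scheme used in the proofs of Theorems~\ref{thm:CLT_Martingale} and~\ref{thm:CLT_SBC}, now with the critical scaling $t^{-1/2}e^{-\lambda t/2}$. Fixing an intermediate time $s=s(t)\in(0,t)$ with $s\to\infty$ and $s/t\to 0$, I would use the branching property to decompose
\[
Z_t(f)-e^{\lambda t}\gamma(f)W \;=\; A_t + B_t,
\]
where
\[
A_t=\sum_{u\in\alives_s}\bigl[Z_{s,t}^u(f)-\moment_{t-s}f(X_s^u)\bigr], \qquad B_t=Z_s(\moment_{t-s}f)-e^{\lambda t}\gamma(f)W.
\]
Conditionally on $\mathcal F_s$ the summands in $A_t$ are independent and centred, so $A_t$ is the new-fluctuation piece on which Stein's method acts, while $B_t$ is a historical remainder controlled by Assumption~\ref{assu:eigenelements} and Theorem~\ref{thm:LGN}.

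For $A_t$, the same conditional Stein argument as in the proof of Theorem~\ref{thm:CLT_SBC} reduces the CLT to identifying the limit of the conditional variance
\[
V_s\;=\;\sum_{u\in\alives_s}\bigl[\moment_{t-s}^{(2)}[f](X_s^u)-(\moment_{t-s}f(X_s^u))^2\bigr],
\]
the fourth-moment input being provided by Assumption~\ref{assu:moments} with $\kappa\ge 4$ together with Proposition~\ref{prop:moments}. A Duhamel expansion of the second-moment semigroup, combined with Assumption~\ref{assu:eigenelements} in the regime $2\rho=\lambda$, should yield the key asymptotics
\[
e^{-\lambda(t-s)}\bigl[\moment_{t-s}^{(2)}[f](x)-(\moment_{t-s}f(x))^2\bigr]\;\sim\;(t-s)\,\Phi(x)
\]
for a function $\Phi\in\mathcal B(V^2)$ built from $\gamma$, $h$ and the quadratic part of the branching mechanism; multiplying by $e^{-\lambda s}Z_s$ and invoking Theorem~\ref{thm:LGN} then gives $t^{-1}e^{-\lambda t}V_s \to \gamma(\Phi)\,W$, so that $\sigma_{f,c}^2=\gamma(\Phi)$.

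For $B_t$, Assumption~\ref{assu:eigenelements} writes $\moment_{t-s}f=e^{\lambda(t-s)}\gamma(f)h+r_{t-s}$ with $\|r_{t-s}\|_V\lesssim e^{(\lambda-\rho)(t-s)}$, whence
\[
t^{-1/2}e^{-\lambda t/2}B_t \;=\; t^{-1/2}e^{\lambda t/2}\gamma(f)(W_s-W) \;+\; t^{-1/2}e^{-\lambda t/2}Z_s(r_{t-s}),
\]
with $W_s=e^{-\lambda s}Z_s(h)$. The main obstacle here, and the reason no quantitative rate appears in the statement, is the tension in the choice of $s(t)$: the variance step of the previous paragraph requires $s/t\to 0$, whereas controlling $e^{\lambda t/2}(W_s-W)/\sqrt t$ demands $s$ large enough that $\mathbb E[(W_s-W)^2]$ is small compared to $t\,e^{-\lambda t}$, and in the critical regime $2\rho=\lambda$ this residual only relaxes polynomially. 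No single deterministic schedule $s(t)$ makes both errors vanish at a quantitative rate, so I would not try to optimise $s$; instead I would conclude by a qualitative two-scale argument, using the $L^2$ convergence $W_s\to W$ from Theorem~\ref{thm:LGN} (with $\kappa=2$) together with a tightness/Slutsky step to pass to the limit in distribution, accepting the loss of a speed of convergence.
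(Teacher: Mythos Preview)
Your decomposition is not the one the paper uses, and the difference is exactly where your obstacle comes from. You centre each summand by its conditional mean, writing $A_t=\sum_{u\in\alives_s}[Z^u_{s,t}(f)-\moment_{t-s}f(X_s^u)]$ and pushing the rest into $B_t$. The paper instead writes, with $W=e^{-\lambda s}\sum_{u\in\alives_s}W^{u,s}$,
\[
Z_t(f)-e^{\lambda t}\gamma(f)W=\sum_{u\in\alives_s}\Bigl(Z^u_{s,t}(f)-e^{\lambda(t-s)}\gamma(f)\,W^{u,s}\Bigr)=:\sum_{u\in\alives_s}A^u_{s,t},
\]
so there is \emph{no} remainder term at all. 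Stein's method is then run directly on these $A^u_{s,t}$ (Section~\ref{sec:main_proof}). They are not $\mathcal F_s$-conditionally centred, but their conditional mean is $\moment_{t-s}f(X_s^u)-e^{\lambda(t-s)}\gamma(f)h(X_s^u)=O(e^{(\lambda-\rho)(t-s)})$ by Assumption~\ref{assu:eigenelements}, and after scaling by $\mathcal S(t)=t^{-1/2}e^{-\lambda t/2}$ this bias vanishes. The conditional second moment is exactly $\mathfrak F^{(2)}_{t-s}[f](X_s^u)$, whose critical asymptotics $e^{-\lambda r}r^{-1}\mathfrak F^{(2)}_r[f]\to\sigma_{f,c}^2 h$ are established in Corollary~\ref{cor:fluctuations_order2}; together with the third-moment bound of Lemma~\ref{lem:fluctuations_order3} this closes the Stein estimate by letting first $r=t-s\to\infty$, then $s\to\infty$.

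By contrast, your scheme has a genuine gap for $\gamma(f)\neq 0$, and it is not merely a loss of rate. First, your claimed asymptotic $e^{-\lambda r}\bigl[\momenttwo_r[f]-(\moment_r f)^2\bigr]\sim r\,\Phi$ is false in that case: writing $Z_r(f)=Z_r(\widehat f)+\gamma(f)e^{\lambda r}W_r$ one finds $\mathrm{Var}_x(Z_r(f))\sim \gamma(f)^2 e^{2\lambda r}\psi^{(2)}_\infty(x)$, so the rescaled conditional variance $t^{-1}e^{-\lambda t}V_s$ diverges whenever $s/t\to 0$. Second, the companion term $t^{-1/2}e^{\lambda t/2}\gamma(f)(W_s-W)$ has $L^2$ norm of order $t^{-1/2}e^{\lambda(t-s)/2}$ by Lemma~\ref{lem:speed_convergence_2}, which also diverges under $s/t\to 0$; no ``qualitative two-scale'' or Slutsky argument can make a diverging term disappear. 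The missing idea is precisely to absorb $W$ into the summands via $W^{u,s}$ rather than to separate it off as a historical remainder.
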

The proofs of Theorem~\ref{thm:CLT_Martingale}, Theorem~\ref{thm:CLT_SBC} and Theorem~\ref{thm:CLT_CBC} are based on Stein's method and are presented in Section~\ref{sec:main_proof}. We remark that we recover the same convergences for the small and critical branching cases that appear in the existing literature (see \cite{kesten1966limit,adamczak2015clt,ren2014central,ren2017central}). However, our assumptions do not allow us to cover the large branching case $\lambda> 2\rho$, which is suspected to correspond to a non-Gaussian and almost-sure or $L^2$ limit of the fluctuations, as it is the case of the finite dimensional setting \cite{kesten1966limit,athreya1969limit} and diagonalizable/jordanizable setting (see \cite{adamczak2015clt,ren2014central} or \cite[Theorem 1.14]{ren2017central}]). 

Note that we have presented Theorem~\ref{thm:CLT_SBC} and Theorem~\ref{thm:CLT_CBC} in the simple form of the convergence of the fluctuation to a normal law (with an associated rate of convergence) but our approach naturally allows to recover the side results of \cite{ren2017central}. More precisely, the explicit expression of the variance we obtain permits to rewrite the proof of \cite[Corollary1.17]{ren2017central} and to obtain the (functional) convergence of the fluctuation process towards a Gaussian fields with explicit covariance matrix. We can also have joint convergence of the fluctuation and quantities as $e^{-\lambda t} Z_t(f)$ through some Slutsky's Lemma type arguments.

The values of $\sigma_h^2$, $\sigma_{f,s}^2$ and $\sigma_{f,c}^2$ correspond to the limit of the properly rescaled variance of $(Z_t - e^{\lambda t} \gamma(f) W)_{t\geq 0}$ (see Corollary~\ref{cor:fluctuations_order2} below). We notice that the limiting Gaussian random variable in the small branching case is non-degenerate when the process is not trivial (see Remark~\ref{rem:non_degenerate}). However, in the critical branching case, Theorem~\ref{thm:CLT_Martingale} implies that $\sigma_{h,c}^2=0$ and thus the limit is not always non-degenerate.

Note that the Article~\cite{Emma} , written independently and in parallel to our work, demonstrates three convergence regimes with an additional hypothesis concerning the second eigenfunction of the mean semigroup (excluding in particular, as \cite{ren2017central,ren2014central}, examples such as the \textit{house of cards} model developed in Section~\ref{sec:applications}). 


\section{Properties of the intrinsic martingale}\label{sec:martingale}

As usual, setting $W_t:=e^{-\lambda t}Z_t(h)$, where $h\in \mathcal B(V)$ is the eigenfunction given by Assumption~\ref{assu:eigenelements}, then $(W_t)_{t\geq 0}$ forms a non-negative martingale. 

We stress the fact that we are assuming $\lambda >0$. In addition, and for convenience, for the rest of the article we will always assume without loss of generality that $\lambda > \rho$ in Assumption~\ref{assu:eigenelements}. In fact, if $\lambda \leq \rho$, then we can always find $\widehat \rho <\lambda$ such that Assumption~\ref{assu:eigenelements} holds for $\widehat \rho$.
\subsection{Asymptotic behaviour of moments and proof of Theorem~\ref{thm:LGN}}\label{sec:moments} We start with a result concerning the moments of the process.

\begin{proposition}\label{prop:moments}
If Assumptions~\ref{assu:moments} and \ref{assu:eigenelements} hold for some $\kappa\in \NN^*$, then there exists $C>0$ such that for all $f\in \mathcal B(V)$, $k\leq \kappa$ and $x\in \mathcal X$,
\begin{equation}\label{eq:general_bound}
    \forall t\geq 0,\ | \moment_t^{(k)}[f](x) |\leq C \|f\|_V^k e^{\lambda k t}V^k(x).
\end{equation}
Moreover when $\gamma(f)=0$, we have  
\begin{enumerate}
    \item (Large Branching Case) If $2\rho <\lambda$ then 
    \begin{equation*}
    \forall t\geq 0,\ | \moment_t^{(k)}[f](x) | \leq C \|f\|_V^k e^{(\lambda - \rho)kt} V^k(x).
    \end{equation*}
    \item (Small Branching Case) If $2\rho >\lambda$ then 
    \begin{equation*}
    \forall t\geq 0,\ | \moment_t^{(k)}[f](x) | \leq C \|f\|_V^k e^{\nf{\lambda kt}{2}} V^k(x).
    \end{equation*}
    \item (Critical Branching Case) If $2\rho = \lambda$, then
    \begin{equation*}
    \forall t\geq 1,\ |\moment_t^{(k)}[f](x)| \leq C\|f\|_V^k t^{ \lfloor\nf{k}{2}\rfloor } e^{\nf{\lambda k}{2}t}V^k(x).
    \end{equation*}
\end{enumerate}
\end{proposition}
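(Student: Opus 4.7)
The analytical backbone of the argument is a Duhamel-type integral identity for the $k$-th moment, obtained from the ODE satisfied by $\mathfrak{M}_t^{(k)}[f]$ (or equivalently by conditioning on the first branching event and using the branching property): one has
\[
\mathfrak{M}_t^{(k)}[f](x) = \mathfrak{M}_t(f^k)(x) + \int_0^t \mathfrak{M}_{t-s}(S_s)(x)\,ds,
\]
where the source $S_s$ is a linear combination of products $\prod_{i=1}^m\mathfrak{M}_s^{(k_i)}[f]$ with $k_1+\cdots+k_m=k$ and $m\ge 2$, so that every factor in $S_s$ is a strictly lower-order moment. This identity is the natural vehicle for strong induction on $k$.

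For the general bound \eqref{eq:general_bound} the base case $k=1$ is exactly Assumption~\ref{assu:eigenelements}. For the inductive step, the induction hypothesis applied to each factor in $S_s$ gives $\|S_s\|_{V^k}\le C e^{\lambda k s}\|f\|_V^k$; combining this with Assumption~\ref{assu:eigenelements} to control $\mathfrak{M}_{t-s}$ acting on $\mathcal{B}(V^k)$ and with Assumption~\ref{assu:moments} for the initial time, the resulting integral is straightforward and yields $|\mathfrak{M}_t^{(k)}[f](x)|\le C\|f\|_V^k e^{\lambda kt}V^k(x)$. The passage from $V$ to an arbitrary $f\in\mathcal{B}(V)$ is immediate via $|Z_t(f)|\le\|f\|_V Z_t(V)$.

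The three refined bounds hinge on the observation that $\gamma(f)=0$ upgrades Assumption~\ref{assu:eigenelements} into $\|\mathfrak{M}_t f\|_V\le a_2 e^{(\lambda-\rho)t}\|f\|_V$, and that this vanishing is preserved in time since $\gamma(\mathfrak{M}_t f)=e^{\lambda t}\gamma(f)=0$. Injecting the refined bound of order $k_i<k$ (from the induction hypothesis) into each factor of $S_s$ produces
\[
|S_s(x)|\le C\,s^{\alpha_k}\,e^{r_* s}\,\|f\|_V^k\,V^k(x),
\]
where $r_*=\max\{\sum_i r_{k_i}:\vec k\vdash k,\;m\ge 2\}$ and $\alpha_k$ records the polynomial accumulation coming from the critical case. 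A direct evaluation of $\int_0^t e^{\lambda(t-s)}s^{\alpha_k}e^{r_*s}\,ds$, together with the initial term $\mathfrak{M}_t(f^k)$ (bounded by Assumption~\ref{assu:eigenelements} applied to $f^k\in\mathcal{B}(V^k)$), yields the announced rate in each regime. In the large case ($2\rho<\lambda$) one has $r_*=(\lambda-\rho)k>\lambda$ and $\alpha_k=0$, so the integral is dominated by its endpoint and contributes $e^{(\lambda-\rho)kt}$; in the small case ($2\rho>\lambda$) one has $r_*\le\lambda k/2$ and $\alpha_k=0$, so the integral and the initial term are both bounded by $e^{\lambda kt/2}$; the critical case ($2\rho=\lambda$) is in exact resonance with $r_*=\lambda k/2$, and the partition combinatorics give $\alpha_k=\lfloor k/2\rfloor$ for $k\ge 3$ while for $k=2$ the extra power of~$t$ is produced by the integration itself since $r_*=\lambda$ exactly; in all cases one recovers the target $t^{\lfloor k/2\rfloor}e^{\lambda kt/2}$.

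The delicate point is the combinatorics of the critical regime: one must verify that $\max\{\sum_i\lfloor k_i/2\rfloor:\vec k\vdash k,\;m\ge 2\}=\lfloor k/2\rfloor$ for $k\ge 3$ (this maximum is attained by partitions whose parts are all $\ge 2$, e.g.~$(2,\ldots,2)$ when $k$ is even) and handle the $k=2$ case separately, where the polynomial factor arises from the integration rather than from the source. Once this polynomial bookkeeping is in place, the rest of the proof is a careful accounting of exponential rates across the different partitions.
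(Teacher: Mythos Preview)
Your strategy differs substantially from the paper's, and while the overall architecture (induction on $k$, case analysis on $\lambda$ versus $2\rho$) is the same, the engine you propose has a structural gap under the paper's hypotheses.

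You build everything on an integral Duhamel identity
\[
\moment_t^{(k)}[f](x) = \moment_t(f^k)(x) + \int_0^t \moment_{t-s}(S_s)(x)\,ds,
\]
with $S_s$ a linear combination of products $\prod_i \moment_s^{(k_i)}[f]$. Such an identity does exist for branching Markov processes with an explicit generator, but its coefficients are the branching rate $b(\cdot)$ and factorial moments of the offspring law. To get $\|S_s\|_{V^k}\le C e^{r_* s}$ you implicitly need these coefficients to be bounded (or dominated by a power of $V$), and nothing in Assumptions~\ref{assu:moments} or~\ref{assu:eigenelements} gives you that: Assumption~\ref{assu:moments} is a local-in-time bound on $\mathbb E_x[Z_t(V)^k]$, not a bound on its time-derivative at $t=0$, and Assumption~\ref{assu:eigenelements} concerns only the first-moment semigroup. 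So the step ``the induction hypothesis applied to each factor in $S_s$ gives $\|S_s\|_{V^k}\le C e^{\lambda k s}\|f\|_V^k$'' is not justified in this setting.

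The paper avoids this entirely by working in \emph{discrete} time: fixing $t_0\le t^*$, the branching property and the multinomial theorem give
\[
\moment_{t_0+r}^{(k+1)}[f](x)\le \moment_{t_0}\moment_r^{(k+1)}[f](x) + C\,r^{\delta(k+1)}e^{\beta(k+1)r}\,\mathbb E_x\big(Z_{t_0}(V)^{k+1}\big),
\]
and the last expectation is bounded by $a_1 V^{k+1}(x)$ \emph{directly by Assumption~\ref{assu:moments}}, with no reference to $b$ or the offspring law. Iterating this one-step estimate over $n$ steps of size $t_0$ produces a geometric sum in place of your integral, and the same exponent comparison ($\lambda$ versus $\beta(k+1)$) yields the three regimes. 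The point is that Assumption~\ref{assu:moments} is tailored to absorb the cross terms at a fixed small time, which is exactly what the discrete recursion exploits and what the continuous Duhamel cannot use without differentiating. Your case analysis of the exponents and the polynomial bookkeeping in the critical case is correct, but to make your argument complete here you would either need to add a hypothesis on the branching mechanism or replace the integral identity by the paper's discrete one.
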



\begin{proof}[Proof of Proposition~\ref{prop:moments}]
Firstly, by Assumption~\ref{assu:eigenelements} we have for any $f\in \mathcal B(V)$
\[ |\moment_t^{(1)}[f](x)| = |\moment_t f(x)|\leq a_2e^{(\lambda - \rho)t}\|f\|_VV(x) + e^{\lambda t}\gamma (f)h(x)\leq C_1\|f\|_Ve^{\lambda t}V(x),\]
where $C_1=(a_2+\gamma(V))$. If in addition $\gamma(f)=0$ we have $ \moment_tf(x)\leq a_2\|f\|_V e^{(\lambda -\rho)t}V(x)$. We then have the result for $k=1$ in the large, small and critical branching cases.  Let us show all possible cases for larger $k$ by an induction argument. In what follow, for sake of presentation, we will consider that $\| f\|_V=1$, which does not affect generality, since it suffices to consider $f/\|f\|_V$ instead of $f$. Let us assume that for some $ k\in  \llbracket 1, \kappa-1 \rrbracket$, we have
\begin{equation}
\label{eq:forme-general-rec}
\forall j \leq k, \quad |\moment_t^{(j)}[f](x)| \leq C_k  t^{\delta j} e^{\beta j t} V^j(x),
\end{equation}
for some constant $C_k>0$, $\delta \in\{0,\nf{1}{2}\}$ and $\beta\in \{\lambda, \lambda-\rho, \nf{\lambda}{2}  \}$ 

For a set $\alives$,  we introduce
\[A(\alives)=\left\{ \alpha\in \NN^\alives,\ \sum_{u\in \alives} \alpha_u=k+1\right\}, \ \overline{A}(\alives) = \{\alpha \in A,  \exists u \in \alives, \ \alpha_u=k+1 \}. \]
We recall that $t^*>0$ is defined in Assumption~\ref{assu:moments}. Fix $t_0\leq \min\{t^*,1\}$. For $r>0$ we have by using the Multinomial Theorem,
\begin{align*}
    \mathbb E_x (Z_{t_0+r}(f)^{k+1}) 
    &= \mathbb E_x\left(\left(\sum_{u\in \alives_{t_0}} Z_{t_0,t_0+r}^u(f)\right)^{k+1}\right)\\
    &= \mathbb E_x\left(\sum_{\alpha \in A(\alives_{t_0})} \frac{(k+1)!}{\prod_{u\in \alives_{t_0}} \alpha_u!} \prod_{u\in \alives_{t_0}} Z_{t_0,t_0+r}^u(f)^{\alpha_u}\right).\\
\end{align*}

In what follows, for sake of notation we set $A = A(\alives_{t_0})$ and $\overline{A}=\overline{A}(\alives_{t_0}).$ Taking conditional expectation with respect to $\mathcal F_{t_0}$ and using \eqref{eq:forme-general-rec} we obtain
\begin{align*} 
\mathbb E_x(Z_{t_0+r}(f)^{k+1})&=\mathbb E_x\left(\sum_{\alpha \in A} \frac{(k+1)!}{\prod_{u\in \alives_{t_0}} \alpha_u!} \prod_{\substack{u\in \alives_{t_0}\\ \alpha_u\neq 0}} \moment_r^{(\alpha_u)}[f](X^u_{t_0})\right)\\
    &=\mathbb E_x\left(\sum_{u\in \alives_{t_0}}  \moment_r^{(k+1)}[f](X^u_{t_0})\right) + \mathbb E_x\left(\sum_{\alpha \in A \setminus \overline{A}} \frac{(k+1)!}{\prod_{u\in \alives_{t_0}} \alpha_u!} \prod_{\substack{u\in \alives_{t_0}\\ \alpha_u\neq 0}} \moment_r^{(\alpha_u)}[f](X^u_{t_0})\right)\\
    &\leq \moment_{t_0} \moment_r^{(k+1)}[f](x) + \mathbb E_x\left(\sum_{\alpha \in A \setminus \overline{A}}  \frac{(k+1)!}{\prod_{u\in \alives_{t_0}} \alpha_u!} \prod_{\substack{u\in \alives_{t_0}\\ \alpha_u \neq 0}} C_k  r^{\delta \alpha_u} e^{\beta \alpha_u r} V^{\alpha_u}(X^u_{t_0})\right).
\end{align*}
Now, using Assumption~\ref{assu:moments} (since $t_0\leq t^*$), the expectation on the right-hand side is equal to
\begin{align*}
    C_k^{k+1}r^{\delta (k+1)} e^{\beta (k+1) r} &\mathbb E_x\left(\sum_{\alpha \in A \setminus \overline{A}} \frac{(k+1)!}{\prod_{u\in \alives_r} \alpha_u!}  \prod_{u\in \alives_r}  V^{\alpha_u}(X^u_r)\right)\\
    &\leq  C_k^{k+1}r^{\delta (k+1)} e^{\beta (k+1) r} \mathbb E_x\left(\left(\sum_{u\in \alives_r} V(X^u_r)\right)^{k+1}\right)\\
    &\leq C_{0,k+1}r^{\delta (k+1)} e^{\beta (k+1) r}  V^{k+1}(x)
\end{align*}
where $C_{0,k+1}=C_k^{k+1}a_1$. Hence, we have
\begin{align}
\label{eq:maj-mom-tmp}
    \moment_{t_0+r}^{(k+1)} [f](x)
    &\leq \moment_{t_0} \moment_r^{(k+1)}[f](x) + C_{0,k+1} r^{\delta (k+1)} e^{\beta (k+1) r} V^{k+1}(x).
\end{align}
Now consider $t>0$, then there exist a unique $n\in \NN$ and a unique $r\in [0,t_0)$ such that $t=nt_0+r$. Using this decomposition, a simple iteration of \eqref{eq:maj-mom-tmp} gives that 
\begin{align*}
    \moment_t^{(k+1)}[f](x)&=\moment_{n t_0 + r}^{(k+1)} [f](x)\\
    &\leq \moment_{n t_0} \moment_{r}^{(k+1)}[f](x) + \sum_{\ell = 0}^{n-1} C_{0,k+1} ((n-1-\ell)t_0+r)^{\delta (k+1)} e^{\beta (k+1) ((n-1-\ell)t_0+r)} \moment_{\ell t_0} V^{k+1}(x).\\
    &\leq a_1a_2e^{\lambda nt_0}V^{k+1}(x) + C_{0,k+1}a_2 t^{\delta (k+1)} e^{\beta (k+1)t} \sum_{\ell = 0}^{n-1}   e^{(\lambda - \beta (k+1))\ell t_0} V^{k+1}(x),
\end{align*}
where we used Assumptions~\ref{assu:moments} and~\ref{assu:eigenelements}. Then, if $\lambda \neq \beta (k+1)$, there exists $C_{1,k+1}>0$ such that
\begin{align*}
    \moment_t^{(k+1)} [f](x)
    & \leq a_1a_2e^{\lambda t}V^{k+1}(x) + C_{0,k+1} a_2 t^{\delta (k+1)}e^{\beta(k+1)r}\dfrac{e^{\lambda nt_0}-e^{\beta(k+1)nt_0}}{e^{(\lambda -\beta(k+1))t_0}-1}V^{k+1}(x)\\
    &\leq C_{1,k+1}\left(e^{\lambda t} + t^{\delta(k+1)}e^{\max\{\lambda, \beta(k+1)\}t}\right)V^{k+1}(x),
\end{align*}
which rewrites, for $t\geq t_0$ and some $C_{2,k+1}$,
\begin{equation}
\label{eq:conclusion-rec1}
\moment_t^{(k+1)} [f](x) \leq C_{2,k+1} t^{\delta (k+1)}  e^{\max\{\beta (k+1), \lambda\} t}V^{k+1}(x) 
\end{equation}
Similarly, in case where $\lambda = \beta (k+1)$, we have for some constant $C_{3,k+1}>0$,
\begin{equation}
\label{eq:conclusion-rec2}
\moment_t^{(k+1)} [f](x) \leq C_{3,k+1} t^{\delta (k+1)+1}  e^{\beta (k+1) t} V^{k+1}(x).
\end{equation}

Let us now conclude our induction argument by dealing with each of the different cases in our proposition. In the general case, $\delta=0$ and $\beta = \lambda$ then \eqref{eq:conclusion-rec1} holds (because $k+1>1$) and gives  \eqref{eq:general_bound}. Consider now the cases where $\gamma(f)=0$.  When $2\rho <\lambda$, we have $\delta=0$ and $\beta=\lambda -\rho$. Consequently $\beta (k+1) \geq 2\lambda -2\rho>\lambda$ and then we conclude using \eqref{eq:conclusion-rec1}.  The case where $2\rho >\lambda$ is more particular. For $k=1$,
\eqref{eq:forme-general-rec} holds with $\delta=0$ and $\beta=\lambda-\rho < \lambda/2$ then \eqref{eq:conclusion-rec1} applies and gives that \eqref{eq:forme-general-rec} holds for $k=2$ with $\delta=0$ and $\beta=\lambda/2$, and we can then iterate the argument. Finally, in the case where $2\rho =\lambda$, we have that for $k=1$,
\eqref{eq:forme-general-rec} holds with $\delta=0$ and $\beta=\lambda-\rho = \lambda/2$ then \eqref{eq:conclusion-rec2} applies and gives that for for $k=2$,
\eqref{eq:forme-general-rec} holds with $\delta=1/2$ and $\beta=\lambda/2$, we can then iterate since we will always have $\lambda < \beta (k+1)$, with $\beta=\lambda/2$. It ends the proof.

\end{proof}
\begin{proof}[Proof of Theorem~\ref{thm:LGN}]
Since $W_t:=e^{-\lambda t}Z_t(h)$ forms a non-negative martingale, it converges almost-surely to a non-negative random variable $W$. Proposition~\ref{prop:moments}, applied to $f=h$, implies that $(W_t)_{t\geq 0}$ is bounded in $L^\kappa(\mathbb P_x)$ and thus converges in $L^\kappa(\mathbb P_x)$.

Now, if  $f\in \mathcal B(V)$ with $\gamma (f)=0$ then Proposition~\ref{prop:moments} entails that $e^{-\lambda \kappa t}\moment_t^{(\kappa)}f(x)$ converges to $0$ for all $x\in \mathcal X$. Since $\kappa$ is even, this implies that $e^{-\lambda t}Z_t(f)$ converges to $0$ in $L^\kappa(\mathbb P_x).$ This ends the proof since
\[e^{-\lambda  t} Z_t(f) = e^{-\lambda t}Z_t(f-\gamma (f) h) + \gamma(f) W_t \xrightarrow[L^\kappa(\mathbb P_x)]{t\to +\infty} \gamma (f) W.\]
\end{proof}

\subsection{Rate of convergence of the martingale} In this section, we compute the speed of convergence of the martingale $(W_t)_{t\geq 0}$ both in $L^2(\mathbb P_x)$ and in $L^4(\mathbb P_x)$, quantities which will be useful in the next section. 

For this part, we will suppose that Assumptions~\ref{assu:moments} and~\ref{assu:eigenelements} for some even $\kappa$ and hold. This allows us to define the function $\psi_t^{(k)}:\mathcal X \longrightarrow \RR_+$, for $k\leq \kappa$, given by
\begin{equation}\label{eq:psi}
    \psi_t^{(k)}(x)= \mathbb E_x((W_t-h(x))^k)\xrightarrow[]{t\to+\infty}\psi_\infty^{(k)}(x)=\mathbb E_x((W-h(x))^k),
\end{equation}
where the convergence holds thanks to Theorem~\ref{thm:LGN}. 

\begin{lemma}\label{lem:speed_convergence_2}
If Assumptions~\ref{assu:moments} and~\ref{assu:eigenelements} hold for $\kappa\geq 2$, then for all $x\in \mathcal X$
\[\lim_{t\to +\infty} e^{\lambda t}\mathbb E_x((W-W_t)^2) = \gamma\left(\psi^{(2)}_\infty\right)h(x).\]
\end{lemma}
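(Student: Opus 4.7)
The plan is to exploit the branching property at time $t$ together with the $L^2$ convergence of the intrinsic martingale so as to reduce the left-hand side to $e^{-\lambda t}\moment_t\psi_\infty^{(2)}(x)$, and then invoke Assumption~\ref{assu:eigenelements} to obtain the stated limit $\gamma(\psi_\infty^{(2)})h(x)$.

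More precisely, for each $u\in\alives_t$ define the intrinsic martingale of the subtree rooted at $u$,
\[ W^u := \lim_{s\to+\infty} e^{-\lambda s}Z^u_{t,t+s}(h), \]
which exists almost surely and in $L^2(\mathbb P_x)$ by Theorem~\ref{thm:LGN}. Conditionally on $\mathcal F_t$, the branching property tells us that the $(W^u)_{u\in\alives_t}$ are independent, and $W^u$ has the same law as $W$ under $\mathbb P_{X_t^u}$. Since $(W_s)_{s\geq 0}$ is a uniformly integrable martingale (by $L^\kappa$ convergence with $\kappa\geq 2$), its limit satisfies $\mathbb E_y(W)=h(y)$ for every $y\in\mathcal X$. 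Writing
\[ W-W_t \;=\; e^{-\lambda t}\sum_{u\in\alives_t}\bigl(W^u-h(X_t^u)\bigr), \]
the conditional independence and zero conditional mean of the summands make all cross-terms vanish when we square and condition on $\mathcal F_t$, yielding
\[ \mathbb E_x\!\left((W-W_t)^2\,\big|\,\mathcal F_t\right) \;=\; e^{-2\lambda t}\sum_{u\in\alives_t}\psi_\infty^{(2)}(X_t^u) \;=\; e^{-2\lambda t}\,Z_t\!\left(\psi_\infty^{(2)}\right). \]
Taking the $\mathbb P_x$-expectation then gives the key identity
\[ \mathbb E_x\!\left((W-W_t)^2\right) \;=\; e^{-2\lambda t}\,\moment_t\psi_\infty^{(2)}(x). \]

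To conclude, Proposition~\ref{prop:moments} (with $k=2$) gives $\mathbb E_y(W^2)\leq CV^2(y)$, from which $\psi_\infty^{(2)}(y)=\mathbb E_y((W-h(y))^2)\leq C'V^2(y)$, i.e.\ $\psi_\infty^{(2)}\in\mathcal B(V^2)$. Multiplying the identity above by $e^{\lambda t}$ and applying Assumption~\ref{assu:eigenelements} with $k=2$ yields
\[ e^{\lambda t}\mathbb E_x\!\left((W-W_t)^2\right) \;=\; e^{-\lambda t}\moment_t\psi_\infty^{(2)}(x) \;\xrightarrow[t\to+\infty]{}\; \gamma\!\left(\psi_\infty^{(2)}\right)h(x), \]
as claimed. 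The main point requiring some care is the justification of the decomposition $W=e^{-\lambda t}\sum_{u\in\alives_t}W^u$ together with the vanishing of cross-terms: this should be written rigorously by starting from the finite-$s$ branching identity $W_{t+s}=e^{-\lambda t}\sum_{u\in\alives_t}e^{-\lambda s}Z^u_{t,t+s}(h)$ and passing to the limit in $L^2(\mathbb P_x)$, for which the second-moment control provided by Proposition~\ref{prop:moments} is precisely what is needed.
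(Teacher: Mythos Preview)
Your proposal is correct and follows essentially the same approach as the paper: obtain the key identity $\mathbb E_x((W-W_t)^2)=e^{-2\lambda t}\moment_t\psi_\infty^{(2)}(x)$ via the branching decomposition and the vanishing of cross-terms, check $\psi_\infty^{(2)}\in\mathcal B(V^2)$ using Proposition~\ref{prop:moments}, and conclude by Assumption~\ref{assu:eigenelements}. The paper carries out precisely the finite-$s$ route you describe in your last paragraph---computing $\mathbb E_x((W_{t+s}-W_t)^2)=e^{-2\lambda t}\moment_t\psi_s^{(2)}(x)$ and then passing to the limit $s\to\infty$ by dominated convergence (using the uniform bound $\psi_s^{(2)}\leq CV^2$)---so your sketch and the paper's proof coincide once that justification is filled in.
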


\begin{remark}\label{rem:non_degenerate}
    One has that the quantity $\gamma\left(\psi^{(2)}_\infty\right)$ is strictly positive when the process is not trivial. In fact, we note that for $x\in \mathcal X$, $\psi^{(2)}_\infty (x) = \text{Var}_x(W)$, and that, for all $t>0$,
    \[W =\sum_{u\in \alives_t} W^{u,t}, \]
    where $W^{u,t} = \lim\limits_{r\to +\infty} e^{\lambda r} Z_{t,t+r}^u(h)$. Since the law of $W^{u,t}$ conditioned to $\mathcal F_t$ is the same law of $W$, then
    \[\text{Var}_x(W) = \text{Var}_x(|\alives_t|)\mathbb E_x(W)^2 + \text{Var}_x(W)\mathbb E_x(|\alives_t|)\geq \text{Var}_x(|\alives_t|)\mathbb E_x(W)^2 = \text{Var}_x(|\alives_t|)h^2(x),\]
    where the last term is not zero when the process is not trivial.
\end{remark}
\begin{proof}[Proof of Lemma~\ref{lem:speed_convergence_2}]
Theorem~\ref{thm:LGN} implies that 
\[\mathbb E_x((W-W_t)^2) = \lim_{s\to +\infty}\mathbb E_x((W_{t+s}-W_t)^2).\]
In addition,
\begin{align*}
    \mathbb E_x((W_{t+s}-W_t)^2) &=e^{-2\lambda t} \mathbb E_x\left(\left(\sum_{u\in \alives_t}e^{-\lambda s}Z_{t,t+s}^u(h) - h(X_t^u)\right)^2\right)\\
    &=e^{-2\lambda t}\mathbb E_x\left(\sum_{u\in \alives_t}(e^{-\lambda s}Z_{t,t+s}^u(h) - h(X_t^u))^2\right),
\end{align*}
where we used the fact that $\mathbb E_x(W_s - h(x)) = 0$ for all $x\in \mathcal X$. Hence,
\begin{align*}
    \mathbb E_x((W_{t+s}-W_t)^2)&=e^{-2\lambda t} \mathbb E_x\left(\sum_{u\in \alives_t} \mathbb E_x((e^{-\lambda s}Z_{t,t+s}^u(h) - h(X_t^u))^2\mid \mathcal F_t)\right)\\
    &=e^{-2\lambda t}\mathbb E_x\left( \sum_{u\in \alives_t} \psi_s^{(2)}(X_t^u)\right)
    =e^{-2\lambda t}\moment_t\psi_s^{(2)}(x).
\end{align*}
Now by Proposition~\ref{prop:moments}, we have that the existence of a constant $C>0$ such that
\begin{equation}
\label{eq:boundedpsi2}
|\psi_s^{(2)}(x)|\leq \mathbb{E}_x(W_s^2) +  h(x)^2 \leq e^{-2\lambda s} \moment_s^{(2)} h(x) + V^2(x) \leq CV^2(x).
\end{equation}
By Assumption~\ref{assu:eigenelements}, $V^2$ is integrable over the measure $(\delta_x \moment_t)$ then the Lebesgue dominated convergence theorem gives that  
\begin{equation}\label{eq:speed_convergence_2}
    \mathbb E_x\left((W-W_t)^2\right) = e^{-2\lambda t}\moment_t\psi_\infty^{(2)}(x).
\end{equation} 

We conclude that 
\[\lim_{t\to +\infty} e^{\lambda t}\mathbb E_x((W-W_t)^2) = \lim_{t\to +\infty} e^{-\lambda t}\moment_t\psi_\infty^{(2)}(x) = \gamma\left(\psi_\infty^{(2)}\right)h(x).\]
\end{proof}
\begin{lemma}\label{lem:speed_convergence_4} 
If Assumptions~\ref{assu:moments} and~\ref{assu:eigenelements} hold for $\kappa\geq 4$, then  there exists $C>0$ such that for all $t\geq 0$ and all $x\in \mathcal X$
\[\mathbb E_x((W-W_t)^4) \leq C e^{-2\lambda t} V^4(x).\]
\end{lemma}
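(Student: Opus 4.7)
The plan is to mirror the proof of Lemma~\ref{lem:speed_convergence_2} but for the fourth moment. I would start from $\mathbb E_x((W-W_t)^4)=\lim_{s\to\infty}\mathbb E_x((W_{t+s}-W_t)^4)$ and decompose $W_{t+s}-W_t=e^{-\lambda t}\sum_{u\in\alives_t}Y_u^{(s)}$ where $Y_u^{(s)}=e^{-\lambda s}Z_{t,t+s}^u(h)-h(X_t^u)$, which are centered and conditionally independent given $\mathcal F_t$, with second and fourth moments $\psi_s^{(2)}(X_t^u)$ and $\psi_s^{(4)}(X_t^u)$. The fourth-moment identity for sums of independent centered variables then gives
\[\mathbb E\!\left[\Big(\sum_u Y_u^{(s)}\Big)^4\,\Big|\,\mathcal F_t\right]=Z_t(\psi_s^{(4)})+3\left(Z_t(\psi_s^{(2)})\right)^2-3Z_t((\psi_s^{(2)})^2).\]
Taking expectations and letting $s\to\infty$ via dominated convergence (using the uniform domination $|\psi_s^{(k)}|\leq CV^k$ for $k\in\{2,4\}$, exactly as in~\eqref{eq:boundedpsi2} combined with Proposition~\ref{prop:moments}), I obtain
\[\mathbb E_x((W-W_t)^4)=e^{-4\lambda t}\!\left[\moment_t\psi_\infty^{(4)}(x)+3\moment_t^{(2)}[\psi_\infty^{(2)}](x)-3\moment_t[(\psi_\infty^{(2)})^2](x)\right].\]

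It then suffices to bound each of the three terms in brackets by $Ce^{2\lambda t}V^4(x)$. Since $\psi_\infty^{(4)}$ and $(\psi_\infty^{(2)})^2$ both lie in $\mathcal B(V^4)$, Assumption~\ref{assu:eigenelements} applied at level $k=4$ (licit since $\kappa\geq 4$) immediately gives $|\moment_t\psi_\infty^{(4)}(x)|,\,|\moment_t[(\psi_\infty^{(2)})^2](x)|\leq Ce^{\lambda t}V^4(x)$, which is better than required. The delicate term is $\moment_t^{(2)}[\psi_\infty^{(2)}]$: the crude bound $|Z_t(\psi_\infty^{(2)})|\leq C\,Z_t(V^2)\leq C\,Z_t(V)^2$ coupled with Proposition~\ref{prop:moments} only produces $Ce^{4\lambda t}V^4(x)$, which is too large by a factor~$e^{2\lambda t}$.

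The main obstacle is precisely this sharp control of $\moment_t^{(2)}[\psi_\infty^{(2)}]$. To achieve it I would reprise the inductive recursion from the proof of Proposition~\ref{prop:moments} specialised to $f=\psi_\infty^{(2)}\in\mathcal B(V^2)$: for $t_0\leq t^*$, conditioning at time $t_0$ yields
\[\moment_{t_0+r}^{(2)}[f](x)=\moment_{t_0}\moment_r^{(2)}[f](x)+\mathbb E_x\!\left[\sum_{u\neq v}\moment_rf(X_{t_0}^u)\,\moment_rf(X_{t_0}^v)\right].\]
The gain with respect to the naïve bound comes from invoking Assumption~\ref{assu:eigenelements} at level $k=2$, which delivers $\moment_rf(y)\leq Ce^{\lambda r}V^2(y)$ (rather than the generic $e^{2\lambda r}$ one would get from Proposition~\ref{prop:moments} applied to the $V^2$ dominating function). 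The cross-term is then bounded by $C^2e^{2\lambda r}\mathbb E_x[Z_{t_0}(V^2)^2]\leq C^2e^{2\lambda r}\mathbb E_x[Z_{t_0}(V)^4]$ (using $(\sum a_i)^2\geq\sum a_i^2$ for nonnegative $a_i$), which is in turn bounded by $C'e^{2\lambda r}V^4(x)$ thanks to Assumption~\ref{assu:moments} with $\kappa\geq 4$. Iterating this inequality over intervals of length $t_0$ and summing the resulting geometric series, exactly as in the proof of Proposition~\ref{prop:moments}, yields the sharp bound $\moment_t^{(2)}[\psi_\infty^{(2)}](x)\leq Ce^{2\lambda t}V^4(x)$. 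Combined with the three-term decomposition above, this produces the announced $Ce^{-2\lambda t}V^4(x)$ estimate.
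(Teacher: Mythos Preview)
Your proof is correct and follows essentially the same route as the paper: the same branching decomposition of $(W_{t+s}-W_t)^4$, the same fourth-moment identity for conditionally independent centered summands, and the same identification of $\moment_t^{(2)}[\psi^{(2)}]$ as the only delicate term. The one economy in the paper's argument is that instead of re-running the induction from Proposition~\ref{prop:moments} by hand, it simply observes that Assumptions~\ref{assu:moments} and~\ref{assu:eigenelements} for $V$ with $\kappa\geq 4$ automatically give the same assumptions for the Lyapunov function $\widetilde V=V^2$ with $\kappa=2$, so the general bound~\eqref{eq:general_bound} of Proposition~\ref{prop:moments} applies directly to yield $\moment_t^{(2)}[V^2](x)\leq Ce^{2\lambda t}V^4(x)$ --- which is exactly what your explicit iteration would produce.
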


\begin{proof}
Theorem~\ref{thm:LGN} implies that 
\[\mathbb E_x((W-W_t)^4) = \lim_{s\to +\infty}\mathbb E_x((W_{t+s}-W_t)^4).\]
In addition, using again that $\mathbb E_x(W_s - h(x)) = 0$ for all $x\in \mathcal X$
\begin{align*}
    \mathbb E_x((W_{t+s}-W_t)^4) &=e^{-4\lambda t} \mathbb E_x\left(\left(\sum_{u\in \alives_t}e^{-\lambda s}Z_{t,t+s}^u(h) - h(X_t^u)\right)^4\right)\\
    &=e^{-4\lambda t}\mathbb E_x\left(\sum_{u\in \alives_t}(e^{-\lambda s}Z_{t,t+s}^u(h) - h(X_t^u))^4\right)\\
    &+ 6 \mathbb E_x\left(\sum_{u\in \alives_t, u\neq v}(e^{-\lambda s}Z_{t,t+s}^u(h) - h(X_t^u))^2 (e^{-\lambda s} Z_{t,t+s}^v(h) - h(X_t^v))^2\right)\\
    &= e^{-4\lambda t}\moment_t\psi_s^{(4)}(x) + 3 e^{-4\lambda t} \mathbb E_x\left(\left( \sum_{u\in \alives_t} \psi_s^{(2)}(X^u_t)\right)^2 \right) - 3 e^{-4\lambda t} \mathbb E_x\left( \sum_{u\in \alives_t} \psi_s^{(2)}(X^u_t)^2 \right)\\
    &\leq e^{-4\lambda t}\moment_t\psi_s^{(4)}(x) + 3 e^{-4\lambda t}\moment_t^{(2)} \psi_s^{(2)}(x).
\end{align*}
Using similar arguments to \eqref{eq:boundedpsi2}, one can prove that there exists a constant $C_1>0$ such that 
\[\mathbb E_x((W_{t+s}-W_t)^4)\leq C_1e^{-4\lambda t}\left(\moment_tV^4(x) + \momenttwo_t[V^2](x)\right).\]

The function $x\mapsto \moment_tV^4$ can be bounded using Assumption~\ref{assu:moments}, whilst for the second function we have that, since Assumptions~\ref{assu:moments} and~\ref{assu:eigenelements} hold for $V$ with $\kappa=4$, then they hold for $V^2$ with $\kappa=2$, and thus we can apply Proposition~\ref{prop:moments} with $V^2$, obtaining,
\[\momenttwo_t[V^2](x) \leq C_2e^{2\lambda t}V^2(x),\ \forall x\in \mathcal X,\]
for some $C_2>0$. In conlusion, there exists $C>0$ such that
\[\mathbb E_x ((W_{t+s}-W_t)^4) \leq Ce^{-2\lambda t}V^4(x),\]
and the proof follows by letting $s\to +\infty$.
\end{proof}
\section{Proof of the central limit theorems by Stein's method}\label{sec:main_proof}

\subsection{Study of the moments of the fluctuations} Before diving into the proofs of our central limit theorems, we study the properties of the moments of the fluctuations of the process. For that end, we define for $k\in \NN$, $f\in \mathcal B(V)$, $x\in \mathcal X$ and $t\geq 0$ the quantity
\begin{equation}\label{eq:goticF}
\mathfrak F_t^{(k)}[f](x) := \mathbb E_x\left(\left|Z_t(f) - e^{\lambda t}\gamma (f)W\right|^k\right).
\end{equation}

Our goal for this part is to study the limit when $t\to +\infty$ of $\mathfrak F_t^{(k)}[f](x)$, properly rescaled depending whether we are in the small or critical branching case, for $k=2$ and $k=3$. For $f\in \mathcal B(V)$, by setting $\widehat f = f - \gamma(f) h$, we have
\begin{align}
    \mathfrak F_t^{(2)}[f](x) & = \mathbb E_x\left(\left(Z_t(f)-e^{\lambda t}\gamma (f) W\right)^2\right) \nonumber\\
    & = \mathbb E_x\left(\left(Z_t(\widehat f) + e^{\lambda t}(W_t-W)\right)^2\right) \nonumber\\
    & = \mathbb E_x \left(Z_t(\widehat f)^2 \right) + 2e^{\lambda t}\gamma (f)\mathbb E_x\left(Z_t(\widehat f)(W_t-W)\right) + \gamma(f)^2 e^{2\lambda t} \mathbb E_x\left((W_t-W)^2\right) \nonumber\\
    &= \momenttwo_t[\widehat f](x) + \gamma(f)^2e^{2\lambda t} \mathbb E_x \left((W_t-W)^2\right)\nonumber\\
    &= \momenttwo_t[\widehat f](x) + \gamma (f)^2 \moment_t\psi_\infty^{(2)}(x),\label{eq:fluctuation_decomp}
\end{align}
where in the last equality we used \eqref{eq:speed_convergence_2}.
The second term can be dealt with by using Lemma~\ref{lem:speed_convergence_2}. Our next result deals with the first term.

\begin{lemma}\label{lem:kappa}
If Assumptions~\ref{assu:moments} and~\ref{assu:eigenelements} hold for $\kappa\geq 2$, then for all $f\in \mathcal B(V)$ with $\gamma(f)=0$ there exists $\eta_s(f)\geq 0$ such that in the small branching case ($\lambda < 2\rho$) for all $x\in \mathcal X$,
\begin{equation}\label{eq:eta_s}
    \lim_{t\to +\infty} e^{-\lambda t} \momenttwo_t [f](x) =  \eta_s(f)h(x).
\end{equation}
Similarly, in the critical branching case ($\lambda = 2\rho$) there exists $\eta_c(f)\geq 0$ such that for all $x\in \mathcal X$,
\begin{equation}\label{eq:eta_c}
    \lim_{t\to +\infty} \frac{e^{-\lambda t}}{t} \momenttwo_t [f](x)=\eta_c(f)h(x).
\end{equation}
\end{lemma}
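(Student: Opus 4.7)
The proof hinges on the \emph{many-to-two} identity
\begin{equation*}
\momenttwo_{t+s}[f] = \moment_t\momenttwo_s[f] + \momenttwo_t[\moment_s f] - \moment_t[(\moment_s f)^2],
\end{equation*}
obtained by expanding $Z_{t+s}(f)^2=\bigl(\sum_{u\in\alives_t}Z_{t,t+s}^u(f)\bigr)^2$ and taking the conditional expectation with respect to $\mathcal{F}_t$ thanks to the branching property. The assumption $\gamma(f)=0$ will be exploited through $\gamma(\moment_t f)=0$ and $\|\moment_t f\|_V\leq a_2 e^{(\lambda-\rho)t}\|f\|_V$ from Assumption~\ref{assu:eigenelements}, which make the two correction terms $\momenttwo_t[\moment_s f]$ and $\moment_t[(\moment_s f)^2]$ amenable to the sharp decay estimates of Proposition~\ref{prop:moments}.

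For the small branching case, I would set $a_t:=e^{-\lambda t}\momenttwo_t[f]$, which lies in $\mathcal{B}(V^2)$ with norm uniformly bounded in $t$ by Proposition~\ref{prop:moments}. Applying the small-branching bound of that proposition to $\moment_t f$ (which has $\gamma(\moment_t f)=0$), both correction terms are bounded by $C_{t_0}e^{2(\lambda-\rho)t}V^2$ for any fixed $t_0>0$, so that the identity rewrites as
\begin{equation*}
a_{t+t_0} = e^{-\lambda t_0}\moment_{t_0}a_t + R_{t,t_0},\qquad |R_{t,t_0}(x)|\leq C_{t_0}e^{-(2\rho-\lambda)t}V^2(x).
\end{equation*}
Testing against $\gamma$ and using $\gamma\moment_{t_0}=e^{\lambda t_0}\gamma$ gives $\gamma(a_{t+t_0})-\gamma(a_t)=\gamma(R_{t,t_0})$, which is exponentially summable since $2\rho>\lambda$; hence $\gamma(a_t)$ is Cauchy and converges to some $\eta_s(f)\geq 0$ (the sign comes from $\momenttwo_t[f]\geq 0$). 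Pointwise convergence is obtained from
\begin{equation*}
|a_t(x)-\eta_s(f)h(x)|\leq |R_{t/2,t/2}(x)| + a_2 e^{-\rho t/2}\|a_{t/2}\|_{V^2}V^2(x) + |\gamma(a_{t/2})-\eta_s(f)|h(x),
\end{equation*}
where the middle term uses Assumption~\ref{assu:eigenelements} applied to $a_{t/2}\in\mathcal{B}(V^2)$; each of the three terms vanishes as $t\to+\infty$, yielding \eqref{eq:eta_s}.

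The critical case follows the same blueprint with the scaling $b_t:=t^{-1}e^{-\lambda t}\momenttwo_t[f]$; the many-to-two identity now rewrites
\begin{equation*}
b_{t+t_0} = \tfrac{t}{t+t_0}\,e^{-\lambda t_0}\moment_{t_0}b_t + \widetilde R_{t,t_0},\qquad |\widetilde R_{t,t_0}(x)|\leq \tfrac{C_{t_0}}{t+t_0}V^2(x),
\end{equation*}
and the fact that $\widetilde R_{t,t_0}$ is only of order $1/t$ is precisely what prevents an explicit rate here. However, the $\gamma$-projected recursion telescopes exactly: setting $c_n:=e^{-\lambda n}\gamma(\momenttwo_n[f])$ and $g_n:=e^{-\lambda n/2}\moment_n f$, a direct computation gives
\begin{equation*}
c_{n+1}-c_n = e^{-\lambda}\gamma(\momenttwo_1[g_n]) - \gamma(g_n^2),
\end{equation*}
and Assumption~\ref{assu:eigenelements} in the critical regime ensures that $(g_n)$ is uniformly bounded in $\mathcal{B}(V)$. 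The right-hand side is therefore a bounded quadratic functional of the orbit of $f$ under the power-bounded operator $T:=e^{-\lambda/2}\moment_1$ acting on $\ker\gamma$, and the main obstacle in this regime is to show that its Cesaro averages $n^{-1}(c_n-c_0)$ converge; this I would establish via a mean-ergodic argument exploiting the contractivity of $e^{-\lambda t}\moment_t$ on $\ker\gamma$ given by Assumption~\ref{assu:eigenelements}. The resulting limit defines $\eta_c(f)\geq 0$, and $b_t(x)\to\eta_c(f)h(x)$ is then recovered from the scaled recursion by the same diagonal argument as in the small case.
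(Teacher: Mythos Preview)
Your treatment of the small branching case is correct and in fact more direct than the paper's proof of this lemma: you establish the Cauchy property of $\gamma(a_t)$ straight from the many-to-two recursion and then upgrade to pointwise convergence by applying Assumption~\ref{assu:eigenelements} to $a_{t/2}\in\mathcal B(V^2)$. The paper instead first argues that any limit must be proportional to $h$ (via uniqueness of positive eigenfunctions) and then proves existence through an explicit iteration of the decomposition over a fixed time-grid $t_0$, writing $\momenttwo_t[f]$ as $\moment_{t-t_0}\momenttwo_{t_0}[f]$ plus a telescoped sum of $\Psi$-terms. Your route is essentially the argument the paper deploys later in Lemma~\ref{lem:speed_fluctuations2} to obtain a quantitative rate, so both approaches are valid and closely related.

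The critical case, however, has a genuine gap. You reduce matters to the Ces\`aro convergence of $n^{-1}\sum_{k<n} Q(T^k f)$, where $Q(g)=e^{-\lambda}\gamma(\momenttwo_1[g])-\gamma(g^2)$ is quadratic and $T=e^{-\lambda/2}\moment_1$ acts on $\ker\gamma\cap\mathcal B(V)$, and then invoke a ``mean-ergodic argument exploiting the contractivity of $e^{-\lambda t}\moment_t$ on $\ker\gamma$''. But in the critical regime $\lambda=2\rho$, Assumption~\ref{assu:eigenelements} only gives $\|T^n f\|_V\leq a_2\|f\|_V$: the exponent $\lambda/2-\rho$ vanishes, so $T$ is merely power-bounded, not contractive (the contractivity you cite pertains to $e^{-\lambda t}\moment_t$, not to $e^{-\lambda t/2}\moment_t$). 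On the non-reflexive space $\mathcal B(V)$ no mean-ergodic theorem is available for power-bounded operators in general, and even if the Ces\`aro means of $T^k f$ did converge, this would not automatically transfer to the \emph{quadratic} quantities $Q(T^k f)$. This is precisely the heart of the critical case, and your sketch does not supply the missing mechanism. The paper avoids this obstruction altogether: it iterates the identity over a fixed step $t_0$ to obtain
\[
\momenttwo_t[f]=\moment_{t-t_0}\momenttwo_{t_0}[f]+\sum_{k=0}^{n(t)-1}\moment_{kt_0+\delta(t)}\Psi_{t_0,(n(t)-k)t_0}[f]+\Psi_{\delta(t),(n(t)+1)t_0}[f],
\]
splits $\Psi_{r,s}[f]=\momenttwo_r[\moment_s f]-\moment_r(\moment_s f)^2$ into two sums of non-negative terms, and controls each term directly via Proposition~\ref{prop:moments} and Assumption~\ref{assu:eigenelements}, thereby replacing the abstract ergodic step by concrete semigroup estimates.
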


\begin{proof} Consider $f\in \mathcal B(V)$ with $\gamma(f)=0$ and suppose without loss of generality that $\|f\|_V=1$. Using the branching property we can show that for all $r,s >0$ and all $x\in\mathcal X$,
\begin{equation}\label{eq:decompo-secondmoment2}
    \momenttwo_{r+s}[f](x) = \moment_r\momenttwo_s[f](x)+\Psi_{r,s}[f](x),
\end{equation}
with
\[\Psi_{r,s}[f](x) = \mathbb E_x\left(\sum_{\substack{u,v\in V_r\\ u\neq v}}\moment_sf(X_r^u)\moment_sf(X_r^u)\right).\]

We start by noticing that by applying Assumption~\ref{assu:eigenelements} and since $\gamma(f)=0$,
\begin{equation}\label{eq:Psi_bound}
|\Psi_{r,s}[f](x)| \leq e^{2(\lambda - \rho)s}\momenttwo_r[V](x).
\end{equation}
Then, if in the small branching case the sequence $\left(e^{-\lambda t}\momenttwo_t[f](x)\right)_{t\geq 0}$ converges to some limit $g_f(x)$, then by \eqref{eq:decompo-secondmoment2} we have for all $r>0$
\begin{align*}
    g_f(x) &= \lim_{s\to +\infty} e^{-\lambda (r+s)} \momenttwo_{r+s}[f](x)\\
    &=\lim_{s\to +\infty} \left(e^{-\lambda r} \moment_r \left(e^{-\lambda s} \momenttwo_s[f]\right) (x) + e^{-\lambda (r+s)}\Psi_{r,s}[f](x)\right)\\
    &=e^{-\lambda r}\moment_r g_f(x),
\end{align*}
where we used the Dominated Convergence Theorem and that the function $s\mapsto e^{-\lambda s}\momenttwo_s[f](x)$ is bounded in $\mathcal B(V)$ by Proposition~\ref{prop:moments}. This means that $g_f\in \mathcal B(V)$ is a positive eigenfunction of $\moment$, and thus by Assumption~\ref{assu:eigenelements} there exists $\eta_s(f)\geq 0$ such that $g(x) = \eta_s(f)h(x)$. The reasoning for the critical branching case is analogous, so the rest of the proof is dedicated to show that the two limits in \eqref{eq:eta_s} and \eqref{eq:eta_c} exist. 

For the rest of the proof we fix $t_0\in (0,1)$. An induction argument using \eqref{eq:decompo-secondmoment2} shows that for all $n\in \NN$,
\begin{align*}
    \momenttwo_{(n+1)t_0}[f](x) = \moment_{nt_0}\momenttwo_{t_0}[f](x)+\sum_{k=0}^{n-1} \moment_{kt_0} \Psi_{t_0,(n-k)t_0}[f](x).
\end{align*}
Hence, if we define for $t>2t_0$ the functions
\[n(t):=\sup\{m\in \NN\ :\ (m+1)t_0<t\},\text{ and }\delta(t):= t-(n(t)+1)t_0\in (0,t_0],\]
where $t>2t_0$ ensures that $n(t)\neq 0$. Then, for all $t>2t_0$, 
\begin{align*}
    \momenttwo_t[f](x) & = \momenttwo_{(n(t)+1)t_0+\delta(t)}[f](x)\\
    & = \moment_{\delta(t)}\momenttwo_{(n(t)+1)t_0}[f](x) + \Psi_{\delta(t),(n(t)+1)t_0}[f](x)\\
    & = \moment_{t-t_0}\momenttwo_{t_0}[f](x) + \sum_{k=0}^{n(t)-1}\moment_{kt_0+\delta(t)}\Psi_{t_0,(n(t)-k)t_0}[f](x)+\Psi_{\delta(t),(n(t)+1)t_0}[f](x).
\end{align*}
Our goal is then to prove that all three terms converge, after proper rescaling, when $t\to +\infty$. We start by pointing out that for all $r,s>0$,
\begin{equation*}\label{eq:Psi_expansion}
    \Psi_{r,s}[f](x)=\momenttwo_r\left[\moment_sf\right](x)-\moment_r\left(\moment_sf\right)^2(x),
\end{equation*}
and thus, the sum in the middle can be decomposed as
\begin{align*}\label{eq:sum_decomposition}
    \sum_{k=0}^{n(t)-1}\moment_{kt_0+\delta(t)}\Psi_{t_0,(n(t)-k)t_0}[f](x)&=\sum_{k=0}^{n(t)-1}\moment_{kt_0+\delta(t)}\momenttwo_{t_0}\left[\moment_{(n(t)-k)t_0}f\right](x)\\ & - \sum_{k=0}^{n(t)-1}\moment_{kt_0+\delta(t)}(\moment_{(n(t)-k)t_0}f)^2(x),
\end{align*}
where both sums on the right-hand side are formed of non-negative terms. Then, we only need to provide upper bounds for both sums. For the first one, since $\gamma(\moment_{(n(t)-k)t_0}f)=0$, by applying Proposition~\ref{prop:moments} both in the small and critical branching case (we recall that $t_0\in (0,1)$) and then Assumption~\ref{assu:eigenelements} with $\gamma(f)=0$, there exists a constant $C>0$, such that 
\begin{align*}
\momenttwo_{t_0}\left[\moment_{(n(t)-k)t_0}f\right](x)\leq Ce^{2(\lambda - \rho)(n(t)-k)t_0}V(x),
\end{align*}
Hence, again by Proposition~\ref{prop:moments}, there exists a constant $K_1>0$ such that
\begin{align*}
    \sum_{k=0}^{n(t)-1}\moment_{kt_0+\delta(t)}\momenttwo_{t_0}\left[\moment_{(n(t)-k)t_0}f\right](x)&\leq C\sum_{k=0}^{n(t)-1}e^{2(\lambda-\rho)(n(t)-k)t_0}\moment_{kt_0+\delta(t)}V(x)\\
    &\leq K_1e^{2(\lambda -\rho)n(t)t_0}\sum_{k=0}^{n(t)-1}e^{-2(\lambda-\rho)kt_0}e^{\lambda (kt_0+\delta(t))}V(x)\\
    &\leq K_1 e^{\lambda t_0}e^{2(\lambda - \rho)t}V(x)\sum_{k=0}^{n(t)-1} e^{-(\lambda - 2\rho)kt_0}.
\end{align*}
In the small branching case, we have then that there exists $\widehat K_1>0$ such that 
\begin{align*}
    e^{-\lambda t}\sum_{k=0}^{n(t)-1}\moment_{kt_0+\delta(t)}\momenttwo_{t_0}[\moment_{(n(t)-k)t_0}f](x)& \leq \widehat K_1 e^{(\lambda -2\rho)t}\left(e^{-(\lambda -2\rho)n(t)t_0}-1\right)V(x)\\
    &\leq \widehat K_1 \left(e^{(\lambda - 2\rho)(t_0+\delta(t))} - e^{(\lambda - 2\rho)t}\right)V(x)\\
    &\leq \widehat K_1 \left(e^{(\lambda - 2\rho)t_0}-1\right)V(x)\\
    &<+\infty,
\end{align*}
where we used that in the small branching case $\lambda-2\rho<0$. For the critical branching case where $\lambda=2\rho$ we have
\begin{align*}
\frac{e^{-\lambda t}}{t} \sum_{k=0}^{n(t)-1}\moment_{kt_0+\delta(t)}\momenttwo_{t_0}\left[\moment_{(n(t)-k)t_0}f\right](x) &\leq K_1 e^{\lambda t_0}V(x)\frac{n(t)}{t}\\
&\leq K_1\frac{e^{\lambda t_0}}{t_0}V(x)\\
&<+\infty.
\end{align*}
We now turn our attention to the second sum. Assumption~\ref{assu:eigenelements} with $\gamma(f)=0$ implies that 
\[\left(\moment_{(n(t)-k)t_0}f(x)\right)^2\leq e^{2(\lambda-\rho)(n(t)-k)t_0}V^2(x).\]
Then, by Assumption~\ref{assu:moments}, there exists a constant $K_2>0$ such that
\[\sum_{k=0}^{n(t)-1} \moment_{kt_0+\delta(t)} \left(\moment_{(n(t)-k)t_0}f\right)^2(x)\leq K_2\sum_{k=0}^{n(t)-1} e^{2(\lambda - \rho)(n(t)-k)t_0}\moment_{kt_0+\delta(t)}V^2(x),\]
which is the same kind of bound that we found for the first sum but with $V^2$ instead of $V$, and so the proof follows similarly.

To conclude the proof, in the small branching case we have that 
\[e^{-\lambda t}\momenttwo_t[f](x) = e^{-\lambda t}\moment_{t-t_0}\momenttwo_{t_0}[f](x) + e^{-\lambda t} \sum_{k=0}^{n(t)-1}\moment_{kt_0+\delta(t)} \Psi_{t_0,(n(t)-k)t_0}[f](x) + e^{-\lambda t}\Psi_{\delta(t),(n(t)+1)t_0}[f](x).\]
For the first term, Assumption~\ref{assu:eigenelements} implies that
\[e^{-\lambda t}\moment_{t-t_0}\momenttwo_{t_0}[f](x)\xrightarrow[]{t\to +\infty}e^{\lambda t_0} \gamma\left(\momenttwo_{t_0}[f]\right),\]
we have already shown that the sum is convergent, and for the last term we have using \eqref{eq:Psi_bound} and Proposition~\ref{prop:moments} that
\[\left|e^{-\lambda t}\Psi_{\delta(t),(n(t)+1)t_0}[f](x)\right|\leq e^{-\lambda t}e^{2(\lambda - \rho)(n(t)+1)t_0} \momenttwo_{\delta(t)} [V](x) \leq K_3 e^{(\lambda - 2\rho)t}e^{2\lambda t_0}V^2(x),\]
where the right-hand side tends to zero when $t$ goes to infinity. The reasoning in the critical branching case is similar, and this concludes the proof.
\end{proof}

Combining the previous lemma with the decomposition for $\mathfrak F_t^{(2)}$ provided in~\eqref{eq:fluctuation_decomp} and Lemma~\ref{lem:speed_convergence_2}, the following direct corollary allows us to define the quantities $\sigma_{f,s}^2$ and $\sigma_{f,c}^2$ appearing in Theorems~\ref{thm:CLT_SBC} and~\ref{thm:CLT_CBC}.

\begin{corollary}\label{cor:fluctuations_order2}
If Assumption~\ref{assu:moments} for $\kappa\geq 2$ and Assumption~\ref{assu:eigenelements} hold, then for all $f\in \mathcal B(V)$ and all $x\in \mathcal X$ we have that in the small branching case
\begin{equation*}
    e^{-\lambda t}\mathfrak F^{(2)}_t[f](x)\xrightarrow[]{t\to +\infty}\underbrace{\left(\eta_s(\widehat{f})+\gamma(f)^2\gamma\left(\psi_\infty^{(2)}\right)\right)}_{=:\sigma_{f,s}^2}h(x),
\end{equation*}
and in the critical branching case
\begin{equation*}
    \frac{e^{-\lambda t}}{t}\mathfrak F^{(2)}_t[f](x)\xrightarrow[]{t\to +\infty} \underbrace{\eta_c(\widehat f)}_{=:\sigma_{f,c}^2}h(x),
\end{equation*}
where $\widehat f= f-\gamma(f)h$, $\eta_s$ and $\eta_c$ are given by Lemma~\ref{lem:kappa} and $\psi_\infty^{(2)}$ is defined in~\eqref{eq:psi}. 

\end{corollary}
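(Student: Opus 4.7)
The plan is essentially to assemble the two ingredients already in hand. Starting from the decomposition~\eqref{eq:fluctuation_decomp}, namely
\[
\mathfrak F_t^{(2)}[f](x) = \momenttwo_t[\widehat f](x) + \gamma(f)^2\,\moment_t\psi_\infty^{(2)}(x),
\]
where $\widehat f = f - \gamma(f)h$ satisfies $\gamma(\widehat f) = 0$, the goal is to identify the scaling limit of each of the two summands on the right-hand side.

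For the first summand, I would apply Lemma~\ref{lem:kappa} directly to $\widehat f$. In the small branching case ($2\rho > \lambda$) this gives
\[
e^{-\lambda t}\,\momenttwo_t[\widehat f](x) \xrightarrow[t\to+\infty]{} \eta_s(\widehat f)\,h(x),
\]
while in the critical case ($2\rho = \lambda$) it gives
\[
\frac{e^{-\lambda t}}{t}\,\momenttwo_t[\widehat f](x) \xrightarrow[t\to+\infty]{} \eta_c(\widehat f)\,h(x).
\]

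For the second summand I would invoke the ergodicity Assumption~\ref{assu:eigenelements} applied to $\psi_\infty^{(2)}$, which is in $\mathcal B(V^2)$ by~\eqref{eq:boundedpsi2} (this is exactly the content of Lemma~\ref{lem:speed_convergence_2} combined with~\eqref{eq:speed_convergence_2}). It yields
\[
e^{-\lambda t}\,\moment_t\psi_\infty^{(2)}(x) \xrightarrow[t\to+\infty]{} \gamma\bigl(\psi_\infty^{(2)}\bigr)\,h(x).
\]
Multiplying by $\gamma(f)^2$ and combining with the first summand, the small branching case gives exactly the limit $\sigma_{f,s}^2 h(x) = \bigl(\eta_s(\widehat f)+\gamma(f)^2\gamma(\psi_\infty^{(2)})\bigr)h(x)$. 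In the critical case, the same convergence implies $\gamma(f)^2\,\frac{e^{-\lambda t}}{t}\moment_t\psi_\infty^{(2)}(x) \to 0$, so only the first summand contributes and we recover $\sigma_{f,c}^2 h(x) = \eta_c(\widehat f) h(x)$.

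There is no real obstacle here: the corollary is a one-line assembly of Lemma~\ref{lem:kappa} with Lemma~\ref{lem:speed_convergence_2}, once one has rewritten $\mathfrak F_t^{(2)}[f]$ as a sum of a centred second-moment term and a martingale-convergence term via~\eqref{eq:fluctuation_decomp}. The only mild care is to note that the exponential scaling $e^{-\lambda t}$ (small case) versus $e^{-\lambda t}/t$ (critical case) makes the $\gamma(f)^2 \moment_t\psi_\infty^{(2)}$ contribution vanish in the critical regime, which explains why $\sigma_{f,c}^2$ contains only $\eta_c(\widehat f)$.
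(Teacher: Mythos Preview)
Your proposal is correct and matches the paper's approach exactly: the paper does not give a separate proof but states that the corollary follows directly by combining Lemma~\ref{lem:kappa} with the decomposition~\eqref{eq:fluctuation_decomp} and Lemma~\ref{lem:speed_convergence_2}, which is precisely what you have written out. Your observation that the $\gamma(f)^2\moment_t\psi_\infty^{(2)}$ term vanishes under the critical scaling $e^{-\lambda t}/t$ is the one detail the paper leaves implicit.
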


\begin{remark}
    Since $\widehat h \equiv 0$, we have that for Theorem~\ref{thm:CLT_Martingale}, $\sigma_h^2$ will be equal to $\gamma \left(\psi_\infty^{(2)}\right)$. 
\end{remark}
In the following lemma we provide an explicit speed for the convergence of the previous lemma in the small branching case. However, our mild assumptions do not allow us to extend this result to the critical branching case.

\begin{lemma}\label{lem:speed_fluctuations2}
IfAssumption~\ref{assu:moments} for $\kappa\geq 2$ and Assumption~\ref{assu:eigenelements} hold, then in the small branching case we have that there exists $K>0$ such that for all $f\in \mathcal B(V)$ and all $t>0$,
\[\left\|e^{-\lambda t}\mathfrak F_t^{(2)}[f]-\sigma^2_{f,s}h\right\|_{V^2} \leq K\left(\|\widehat f\|_V^2+\gamma(f)^2\|\psi_\infty^{(2)}\|_{V^2}\right)e^{(\nf{\lambda}{2}-\rho)t},\]
where $\widehat f = f -\gamma(h)f$ and $\sigma_{f,s}^2 = \eta_s(\widehat f)+\gamma(f)^2\gamma\left(\psi_\infty^{(2)}\right)$.
\end{lemma}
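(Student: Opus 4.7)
The plan is to split the quantity using \eqref{eq:fluctuation_decomp} together with Corollary~\ref{cor:fluctuations_order2}, which identifies $\sigma_{f,s}^2 = \eta_s(\widehat f) + \gamma(f)^2\gamma(\psi_\infty^{(2)})$. Setting $\varphi_t := e^{-\lambda t}\momenttwo_t[\widehat f] - \eta_s(\widehat f)h$, this gives
\[
e^{-\lambda t}\mathfrak F_t^{(2)}[f] - \sigma_{f,s}^2 h = \varphi_t + \gamma(f)^2\bigl(e^{-\lambda t}\moment_t\psi_\infty^{(2)} - \gamma(\psi_\infty^{(2)})h\bigr).
\]
The second summand is directly controlled by Assumption~\ref{assu:eigenelements} applied to $\psi_\infty^{(2)}\in \mathcal B(V^2)$, which yields a $\|\cdot\|_{V^2}$-bound of $a_2\,e^{-\rho t}\|\psi_\infty^{(2)}\|_{V^2}$; since $e^{-\rho t}\leq e^{(\lambda/2-\rho)t}$ for $t\geq 0$ (as $\lambda>0$), this already has the desired rate, so the whole difficulty lies in estimating $\varphi_t$.

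To handle $\varphi_t$, I would use the branching decomposition \eqref{eq:decompo-secondmoment2} with $r = s = t/2$ together with $\moment_{t/2}h = e^{\lambda t/2}h$, which produces the recursion
\[
\varphi_t = e^{-\lambda t/2}\moment_{t/2}\varphi_{t/2} + e^{-\lambda t}\Psi_{t/2,t/2}[\widehat f].
\]
For the $\Psi$-term, I would use the factorization $\Psi_{r,s}[\widehat f] = \momenttwo_r[\moment_s\widehat f] - \moment_r[(\moment_s\widehat f)^2]$. Since $\gamma(\moment_{t/2}\widehat f) = e^{\lambda t/2}\gamma(\widehat f) = 0$ and $\|\moment_{t/2}\widehat f\|_V \leq a_2\,e^{(\lambda-\rho)t/2}\|\widehat f\|_V$ by Assumption~\ref{assu:eigenelements}, Proposition~\ref{prop:moments} in the small branching case yields $|\momenttwo_{t/2}[\moment_{t/2}\widehat f](x)|\leq C\|\widehat f\|_V^2\,e^{(3\lambda/2-\rho)t}V^2(x)$, while Assumption~\ref{assu:eigenelements} applied to $(\moment_{t/2}\widehat f)^2\in \mathcal B(V^2)$ yields the same type of bound for $\moment_{t/2}[(\moment_{t/2}\widehat f)^2]$. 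Hence $e^{-\lambda t}\|\Psi_{t/2,t/2}[\widehat f]\|_{V^2}\leq C\|\widehat f\|_V^2\,e^{(\lambda/2-\rho)t}$.

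For the semigroup term of the recursion, Assumption~\ref{assu:eigenelements} applied to $\varphi_{t/2}\in \mathcal B(V^2)$ yields
\[
e^{-\lambda t/2}\moment_{t/2}\varphi_{t/2} = \gamma(\varphi_{t/2})h + R_t, \qquad \|R_t\|_{V^2}\leq a_2\,e^{-\rho t/2}\|\varphi_{t/2}\|_{V^2}.
\]
Proposition~\ref{prop:moments} combined with $|\eta_s(\widehat f)|\leq C\|\widehat f\|_V^2\gamma(V^2)$ (obtained by integrating $|\eta_s(\widehat f)h(x)|\leq C\|\widehat f\|_V^2 V^2(x)$ against $\gamma$, using $\gamma(h)=1$) gives the uniform bound $\|\varphi_{t/2}\|_{V^2}\leq C\|\widehat f\|_V^2$, and since $\lambda>\rho$ one has $e^{-\rho t/2}\leq e^{(\lambda/2-\rho)t}$, so $R_t$ has the right rate. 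The main obstacle is thus to control $|\gamma(\varphi_{t/2})|$.

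The plan is to establish the rate on $\gamma(\varphi_t)$ by a telescoping/Cauchy argument. Applying $\gamma$ to $\momenttwo_{t+s}[\widehat f] = \moment_s\momenttwo_t[\widehat f] + \Psi_{s,t}[\widehat f]$ and using $\gamma\moment_s = e^{\lambda s}\gamma$ gives
\[
\gamma(\varphi_{t+s}) - \gamma(\varphi_t) = e^{-\lambda(t+s)}\gamma(\Psi_{s,t}[\widehat f]).
\]
The same factorization estimate as above, now with $s$ arbitrary instead of $t/2$, yields $|e^{-\lambda(t+s)}\gamma(\Psi_{s,t}[\widehat f])|\leq C\|\widehat f\|_V^2\gamma(V^2)\,e^{(\lambda-2\rho)t}$ independently of $s$. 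Letting $s\to +\infty$, the pointwise convergence $\varphi_{t+s}\to 0$ from Lemma~\ref{lem:kappa}, together with the uniform domination by $C\|\widehat f\|_V^2 V^2$ and $\gamma(V^2)<+\infty$ (from Assumption~\ref{assu:eigenelements}), forces $\gamma(\varphi_{t+s})\to 0$ by dominated convergence. This yields $|\gamma(\varphi_t)|\leq C\|\widehat f\|_V^2\,e^{(\lambda-2\rho)t}$, and in particular $|\gamma(\varphi_{t/2})|\leq C\|\widehat f\|_V^2\,e^{(\lambda/2-\rho)t}$. Using $h\leq \|h\|_{V^2}V^2$ to transfer this to a $V^2$-bound on $\gamma(\varphi_{t/2})h$ and combining the four estimates yields $\|\varphi_t\|_{V^2}\leq K\|\widehat f\|_V^2\,e^{(\lambda/2-\rho)t}$, which together with the $\gamma(f)^2$-part established in the first paragraph gives the conclusion.
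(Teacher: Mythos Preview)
Your argument is correct and follows essentially the same route as the paper: you use the decomposition~\eqref{eq:fluctuation_decomp}, handle the $\psi_\infty^{(2)}$-part directly via Assumption~\ref{assu:eigenelements}, control $|\gamma(\varphi_t)|$ by the same Cauchy/telescoping argument (letting the outer semigroup time go to infinity in the integrated branching identity), and then combine with the spectral-gap bound on $\moment_{t/2}$ applied to $\varphi_{t/2}$. The only organisational difference is that the paper first proves the two-parameter estimate $\|e^{-\lambda(t_1+t_2)}\momenttwo_{t_1+t_2}[g]-\eta_s(g)h\|_{V^2}\leq C\|g\|_V^2(e^{-\rho t_1}+e^{(\lambda-2\rho)t_2})$ and optimises over $t_1+t_2=t$ at the end, whereas you fix the split $t_1=t_2=t/2$ from the outset; since this split is optimal up to constants (using the standing convention $\lambda>\rho$ to absorb the $e^{-\rho t/2}$ term), both yield the rate $e^{(\lambda/2-\rho)t}$.
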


\begin{proof}
    Consider $f\in \mathcal B(V)$ and set $\widehat f = f-\gamma(f)h$. Decomposing $\mathfrak F_t^{(2)}[f]$ as in \eqref{eq:fluctuation_decomp}, the definition of $\sigma_{f,s}^2$ and  Lemma~\ref{lem:speed_convergence_2}, we have that for all $t>0$,
    \begin{align*}
        \left\|e^{-\lambda t}\mathfrak F_t^{(2)}[f] -\sigma^2_{f,s}h\right\|_{V^2} \leq \left\|e^{-\lambda t} \momenttwo_t[\widehat f] - \eta_s(\widehat f)h\right\|_{V^2} + \gamma(f)^2\left\|e^{-\lambda t}\moment_t\psi_\infty^{(2)}-\gamma\left(\psi_\infty^{(2)}\right)h\right\|_{V^2}.
    \end{align*}
    For the second term on the right-hand side, since $\psi_\infty^{(2)}\in \mathcal B(V^2)$, we apply Assumption~\ref{assu:eigenelements} which entails that
    \[\left\|e^{-\lambda t}\moment_t\psi_\infty^{(2)}-\gamma\left(\psi_\infty^{(2)}\right)h\right\|_{V^2}\leq a_2e^{-\rho t}\|\psi_\infty^{(2)}\|_{V^2}\leq a_2 e^{(\nf{\lambda}{2}-\rho)t}\|\psi_\infty^{(2)}\|_{V^2},\]
    whilst for the first term we will prove the following more general result: there exists $C>0$ such that for all $g\in \mathcal B(V)$ with $\gamma(g)=0$ and all $t_1,t_2>0$ 
    \begin{equation}\label{eq:lem_speed_fluctuations2}
        \left\|e^{-\lambda (t_1+t_2)}\momenttwo_{t_1+t_2}[g] - \eta_s(g)h\right\|_{V^2}\leq C\|g\|_V^2\left(e^{-\rho t_1} + e^{(\lambda - 2\rho) t_2}\right).
    \end{equation}
    This is enough to conclude since, given that $\gamma (\widehat f) = 0$, we then have 
    \begin{align*}
        \left\|e^{-\lambda t}\momenttwo_t[\widehat f] - \eta_s(\widehat f) h\right\|_V &\leq C \|\widehat f\|_V^2\inf_{\substack{t_1,t_2>0\\ t_1+t_2=t}} \left(e^{-\rho t_1}+e^{(\lambda - 2\rho)t_2}\right)\\ &= C \|\widehat f\|_V^2 \inf_{r\in [0,t]} \left(e^{-\rho(t-r)}+e^{(\lambda -2\rho)r}\right)\\
        &= C\|\widehat f\|_V^2e^{(\nf{\lambda}{2}-\rho) t},
    \end{align*}
    and the result of the lemma follows from here. The rest of the proof is dedicated to prove \eqref{eq:lem_speed_fluctuations2}. 
    
    For all $t_1,t_2>0$, by using the branching property at time $t_1$ we have that for all $x\in \mathcal X$
    \begin{equation}\label{eq:branching_decomposition}
        \momenttwo_{t_1+t_2}[g](x) =  \moment_{t_1}\momenttwo_{t_2}[g](x) + \momenttwo_{t_1}\left[\moment_{t_2}g\right](x) - \moment_{t_1}(\moment_{t_2}g)^2(x).
    \end{equation}
    In particular, integrating over $\gamma$ and scaling by $e^{-\lambda (t_1+t_2)}$, we have that for all $t_1,t_2>0$
    \begin{align}\label{eq:lem_fluctuations2_cauchy}
    \left| e^{-\lambda (t_1+t_2)}\gamma \left(\moment_{t_1+t_2}^{(2)} [g]\right)  - e^{-\lambda t_2}  \gamma \left(\moment_{t_2}^{(2)} [g]\right) \right| \leq e^{-\lambda (t_1+t_2) } \gamma \left(\moment_{t_1}^{(2)} \left[\moment_{t_2} g\right]\right) + e^{-\lambda t_2}\gamma \left( (\moment_{t_2} g)^2\right).
    \end{align}
    Since $\gamma(\moment_{t_2}g) = 0$, Proposition~\ref{prop:moments} for the small branching case entails that there exists $C_1>0$ such that
    \[e^{-\lambda (t_1+t_2)} \momenttwo_{t_1}[\moment_{t_2}g](x) \leq C_1\|\moment_{t_2}g\|_V^2 e^{-\lambda t_2}V^2(x).\]
    In addition, Assumption~\ref{assu:eigenelements} with $\gamma(g)=0$ gives
    \begin{equation}\label{eq:moment1_gamma0}
    |\moment_{t_2}g(x)|\leq a_2 e^{(\lambda -\rho)t_2}\|g\|_VV(x).
    \end{equation}
    Hence, 
    \begin{equation}\label{eq:useful_fluctuations}
        e^{-\lambda (t_1+t_2)}\momenttwo_{t_1}\left[\moment_{t_2}g\right](x) \leq C_1a_2^2\|g\|_V^2e^{(\lambda -2\rho)t_2} V^2(x),
    \end{equation}
   and thus by integrating with respect to $\gamma$,
   \[e^{-\lambda (t_1+t_2)}\gamma \left(\momenttwo_{t_1}[\moment_{t_2}g]\right) \leq C_1a_2^2\|g\|_V^2 \gamma(V^2) e^{(\lambda -2\rho)t_2}.\]
   Similarly, using \eqref{eq:moment1_gamma0} again,
   \[e^{-\lambda t_2}\gamma\left((\moment_{t_2}g)^2\right) \leq a_2^2 \|g\|_V^2\gamma(V^2)e^{(\lambda - 2\rho)t_2}.\]
   These bounds in \eqref{eq:lem_fluctuations2_cauchy} imply by letting $t_1$ go to infinity (and since $\gamma(h)=1$) that there exists $C_2>0$ such that 
   \begin{equation}\label{eq:lem_fluctuations_conclusion1}
        \left|e^{-\lambda t_2}\gamma\left(\momenttwo_{t_2}[g]\right) -\eta_s(g)\right| \leq C_2\|g\|_V^2e^{(\lambda -2\rho)t_2}.
   \end{equation}
    On the other hand, using again~\eqref{eq:branching_decomposition}, we have that for all $t_1,t_2>0$ and all $x\in \mathcal X$,
    \begin{align*}
        &\left|e^{-\lambda (t_1+t_2)}\momenttwo_{t_1+t_2}[g](x)-e^{-\lambda t_2}\gamma\left(\momenttwo_{t_2}[g]\right)h(x)\right|\\
        &\quad \leq e^{-\lambda(t_1+t_2)}\momenttwo_{t_1}[\moment_{t_2}g](x) \\ &\qquad +e^{-\lambda(t_1+t_2)}\moment_{t_1}\left(\moment_{t_2}g\right)^2(x) +\left|e^{-\lambda t_1}\moment_{t_1}\left(e^{-\lambda t_2}\momenttwo_{t_2}[g]\right)(x) -\gamma \left( e^{-\lambda t_2}\momenttwo_{t_2}(g)\right)h(x)\right|.
    \end{align*}
    The first term on the right-hand side is bounded in~\eqref{eq:useful_fluctuations}. For the second term we can use Assumption~\ref{assu:eigenelements} and then~\eqref{eq:moment1_gamma0} which implies the existence of a constant $C_3>0$ such that 
    \[e^{-\lambda(t_1+t_2)}\moment_{t_1}\left(\moment_{t_2}g\right)^2(x)\leq C_3 e^{(\lambda -2\rho)t_2}\|g\|_V^2 V^2(x).\]
    For the third term, Assumption~\ref{assu:eigenelements} and Proposition~\ref{prop:moments} yield
    \begin{align*}
        \left|e^{-\lambda t_1}\moment_{t_1}\left(e^{-\lambda t_2}\momenttwo_{t_2}[g]\right)(x) -\gamma \left( e^{-\lambda t_2}\momenttwo_{t_2}(g)\right)h(x)\right|&\leq a_2 e^{-\rho t_1} \left\|e^{-\lambda t_2}\momenttwo_{t_2}[g]\right\|_{V^2} V^2(x)\\
        &\leq C_4 \|g\|^2_V e^{-\rho t_1}V(x),
    \end{align*}
    for some $C_4>0$. Thus, there exists $C_5>0$ such that
    \begin{equation}\label{eq:lem_fluctuations_conclusion2}
        \left\|e^{-\lambda (t_1+t_2)}\momenttwo_{t_1+t_2}[g]-e^{-\lambda t_2}\gamma\left(\momenttwo_{t_2}(g)\right)h\right\|_{V^2} \leq C_5\|g\|_V^2 \left(e^{-\rho t_1} + e^{(\lambda -2\rho) t_2}\right).
    \end{equation}
    Combining~\eqref{eq:lem_fluctuations_conclusion1} and~\eqref{eq:lem_fluctuations_conclusion2}, we conclude that there exists $C>0$ such that
    \begin{align*}
        &\left\|e^{-\lambda (t_1+t_2)}\momenttwo_{t_1+t_2}[g] - \eta_s(g)h\right\|_{V^2} \\
        &\qquad \leq \left\|e^{-\lambda (t_1+t_2)}\momenttwo_{t_1+t_2}[g]-e^{-\lambda t_2}\gamma\left(\momenttwo_{t_2}(g)\right)h\right\|_{V^2} + \left|e^{-\lambda t_2}\gamma\left(\momenttwo_{t_2}[g]\right) -\eta_s(g)\right|\\
        &\qquad \leq C\|g\|_V^2 \left(e^{-\rho t_1} + e^{(\lambda -2\rho)t_2}\right),
    \end{align*}
    and the proof complete.
\end{proof}

We finish this section with a result on the moments of third order of the fluctuations of the process.

\begin{lemma}\label{lem:fluctuations_order3}
If Assumptions~\ref{assu:moments} and~\ref{assu:eigenelements} hold for $\kappa\geq 4$, then for all $f\in \mathcal B(V)$ there exists a constant $C>0$ such that for all $x\in \mathcal X$ and all $t\geq 1$, in the small branching case $(\lambda < 2\rho)$, 
\[e^{-\nf{-3\lambda}{2}}\mathfrak F_t^{(3)}[f](x)\leq CV^3(x),\ \forall x\in \mathcal X,\]
whilst in the critical branching case $(\lambda = 2\rho)$
\[\frac{e^{\nf{-3\lambda t}{2}}}{t^{-\nf{3}{2}}}\mathfrak F_t^{(3)}[f](x) \leq CV^3(x), \forall x\in \mathcal X.\]
\end{lemma}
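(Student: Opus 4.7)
The plan is to bound $\mathfrak F_t^{(3)}[f](x)$ by interpolation between the second and fourth moments, both of which admit clean bounds thanks to the results already established. Concretely, by the Cauchy--Schwarz inequality applied to $|Z_t(f)-e^{\lambda t}\gamma(f)W|^{3} = |Z_t(f)-e^{\lambda t}\gamma(f)W|\cdot (Z_t(f)-e^{\lambda t}\gamma(f)W)^{2}$, we obtain
\[
\mathfrak F_t^{(3)}[f](x)\;\leq\;\sqrt{\mathfrak F_t^{(2)}[f](x)}\cdot\sqrt{\mathfrak F_t^{(4)}[f](x)}.
\]
It therefore suffices to provide upper bounds for $\mathfrak F_t^{(2)}[f](x)$ of order $e^{\lambda t}V^2(x)$ (resp.\ $t\,e^{\lambda t}V^2(x)$) and for $\mathfrak F_t^{(4)}[f](x)$ of order $e^{2\lambda t}V^4(x)$ (resp.\ $t^2 e^{2\lambda t}V^4(x)$) in the small (resp.\ critical) branching case.

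For the second moment, I would use the decomposition~\eqref{eq:fluctuation_decomp}, namely
\[
\mathfrak F_t^{(2)}[f](x) = \momenttwo_t[\widehat f](x) + \gamma(f)^{2}\moment_{t}\psi_{\infty}^{(2)}(x),
\]
with $\widehat f = f - \gamma(f)h$ satisfying $\gamma(\widehat f)=0$ (since $\gamma(h)=1$). The first summand is controlled by Proposition~\ref{prop:moments} in the small (resp.\ critical) branching case by $C\|\widehat f\|_V^{2}e^{\lambda t}V^{2}(x)$ (resp.\ $C\|\widehat f\|_V^{2}\,t\,e^{\lambda t}V^{2}(x)$ for $t\geq 1$). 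The second summand is handled by noticing that $\psi_\infty^{(2)}\in\mathcal B(V^2)$ (this is exactly the estimate~\eqref{eq:boundedpsi2}), so that Assumption~\ref{assu:eigenelements} applied with $k=2$ yields $\moment_{t}\psi_\infty^{(2)}(x)\leq C\,e^{\lambda t}V^2(x)$.

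For the fourth moment, I would first use the trivial inequality $(a+b)^4\leq 8(a^4+b^4)$ applied to the splitting $Z_t(f)-e^{\lambda t}\gamma(f)W = Z_t(\widehat f)+\gamma(f)e^{\lambda t}(W_t-W)$, to get
\[
\mathfrak F_t^{(4)}[f](x)\;\leq\; 8\,\moment_t^{(4)}[\widehat f](x)+8\,\gamma(f)^{4}e^{4\lambda t}\mathbb E_x\!\left((W_t-W)^{4}\right).
\]
The first term is bounded via Proposition~\ref{prop:moments} applied with $k=4$ and $\gamma(\widehat f)=0$, producing $C\,e^{2\lambda t}V^{4}(x)$ in the small branching case and $C\,t^{2}e^{2\lambda t}V^{4}(x)$ in the critical case. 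The second term is controlled by Lemma~\ref{lem:speed_convergence_4}, which gives $e^{4\lambda t}\mathbb E_x((W-W_t)^4)\leq C\,e^{2\lambda t}V^4(x)$ uniformly in both regimes.

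Combining these bounds through the Cauchy--Schwarz inequality above, the small branching case yields $\mathfrak F_t^{(3)}[f](x)\leq C\,e^{3\lambda t/2}V^{3}(x)$, while the critical branching case yields $\mathfrak F_t^{(3)}[f](x)\leq C\,t^{3/2}e^{3\lambda t/2}V^{3}(x)$ for $t\geq 1$, which is the claim. I do not anticipate any genuine obstacle: the argument is essentially bookkeeping, the key structural point being that the projection $\widehat f = f-\gamma(f)h$ has vanishing $\gamma$--mass, allowing us to invoke the sharper bounds of Proposition~\ref{prop:moments}, and that the Gaussian-type exponent $3\lambda/2$ emerges precisely as the geometric mean of the exponents $\lambda$ and $2\lambda$ coming from the second and fourth moments.
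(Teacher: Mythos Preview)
Your proposal is correct and uses the same ingredients as the paper --- the decomposition $Z_t(f)-e^{\lambda t}\gamma(f)W=Z_t(\widehat f)+\gamma(f)e^{\lambda t}(W_t-W)$, Proposition~\ref{prop:moments}, Lemma~\ref{lem:speed_convergence_4}, and Cauchy--Schwarz --- but the organization differs slightly. The paper first expands $|a+b|^3$ into four terms and then applies Cauchy--Schwarz separately to each cross term (e.g.\ bounding $\mathbb E_x(|Z_t(\widehat f)|^2|W_t-W|)$ by $(\moment_t^{(4)}[\widehat f])^{1/2}\mathbb E_x((W_t-W)^2)^{1/2}$), whereas you apply Cauchy--Schwarz once at the top level to reduce directly to $\mathfrak F_t^{(2)}$ and $\mathfrak F_t^{(4)}$, and only then decompose. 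Your route is a touch more economical since it avoids handling four separate terms, but both arguments rely on exactly the same moment bounds and produce the same exponent $3\lambda/2$ as the geometric mean of $\lambda$ and $2\lambda$.
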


\begin{proof}
Consider $f\in \mathcal B(V)$ and set $\widehat f = f-\gamma(f)h$. We have that for all $t\geq 0$ and $x\in \mathcal X$
\begin{align*}
    \mathfrak F_t^{(3)}[f](x)&=\mathbb E_x\left(\left|Z_t(f) - e^{\lambda t}\gamma (f) W\right|^3\right)\\
    &= \mathbb E_x\left(\left|Z_t(\widehat f) + \gamma(f)e^{\lambda t}(W_t-W)\right|^3\right)\\
    &\leq \mathbb E_x\left(|Z_t(\widehat f)|^3\right)+ 3\gamma(f)e^{\lambda t}\mathbb E_x\left(|Z_t(\widehat f)|^2|W_s-W|\right)\\
    &\qquad +3\gamma(f)^2e^{2\lambda t} \mathbb E_x\left(|Z_t(\widehat f)|(W_t-W)^2\right)+\gamma(f)^3e^{3\lambda t}\mathbb E_x\left(|W_s-W|^3\right)
\end{align*}
We proceed to bound the four terms separately. We deal only with the small branching case, the other case being analogous. 

In what follows $C>0$ represents a constant that depends on $f$ and that can vary from line to line. By the Cauchy-Schwarz inequality and Proposition~\ref{prop:moments} for the small branching case, since $\gamma(\widehat f)=0$, we have for the first term,
\begin{align*}
    \mathbb E_x\left(|Z_t(\widehat f)|^3\right) &\leq \mathbb E_x\left(Z_t(\widehat f)^2\right)^{\nf{1}{2}}\mathbb E_x\left(Z_t(\widehat f)^4\right)^{\nf{1}{2}}\\
    &= \left(\moment_t^{(2)}[\widehat f](x)\right)^{\nf{1}{2}}\left(\moment_t^{(4)}[\widehat f](x)\right)^{\nf{1}{2}}\\
    &\leq C\left(\|\widehat f\|_V^2 e^{\lambda t} V^2(x)\right)^{\nf{1}{2}} \left( \|\widehat f\|_V^2 e^{2\lambda t}V^4(x)\right)^{\nf{1}{2}}\\
    &\leq Ce^{\nf{3\lambda t}{2}}V^3(x).
\end{align*}
For the second term we use in addition Lemma~\ref{lem:speed_convergence_2} obtaining,
\begin{align*}
    3\gamma(f)e^{\lambda t}\mathbb E_x\left(|Z_t(\widehat f)|^2|W_s-W|\right)&\leq C\gamma(f)e^{\lambda t} \left(\moment_t^{(4)}\widehat f(x)\right)^{\nf{1}{2}}\mathbb E_x\left((W-W_t)^2\right)^{\nf{1}{2}}\\
    &\leq C 3\gamma(f) e^{\lambda t} \left(\|\widehat f\|_V^4 e^{2\lambda t}V^4(x)\right)^{\nf{1}{2}}\left(e^{-\lambda t}\|\psi_\infty^{(2)}\|_{V^2}V^2(x)\right)^{\nf{1}{2}}\\
    &\leq C e^{\nf{3\lambda t}{2}} V^3(x).
\end{align*}
The last two terms are bounded similarly and using also Lemma~\ref{lem:speed_convergence_4}.
\end{proof}


\subsection{Basics on Stein's method}\label{sec:Stein} In this section we provide a brief overview on Stein's method, which is a technique that allows to estimate the distance between a given probability distribution and a Gaussian random variable. This method can be extended to approximate any other probability law (see \cite{chen2010normal,ross2011fundamentals} for details), however we will concentrate only on the Gaussian case.

The main idea behind Stein's method is that a random variable $\mathcal Z_\sigma$ follows a Gaussian distribution of zero mean and variance $\sigma^2$ if and only if
\[\sigma^2 G'(\mathcal Z_\sigma) - \mathcal Z_\sigma G(\mathcal Z_\sigma)=0,\]
for all function $G$ regular enough. With this in mind, we set Stein's equation as
\begin{equation}\label{eq:Stein_equation}
    F(x) - \pi_\sigma(F) = \sigma^2 G'(x) - xG(x),\ \forall x\in \RR,
\end{equation}
where $\pi_\sigma$ is the law of $\mathcal Z_\sigma$. Thus, given two family of functions $\mathcal F$ and $\mathcal G$, if for all $F\in \mathcal F$ there exists $G\in \mathcal G$ such that \eqref{eq:Stein_equation} holds, then for any random variable $X$
\[d_{\mathcal F}(X,\mathcal Z_\sigma) := \sup_{F\in \mathcal F} |\mathbb E(F(X)) - \mathbb E(f(\mathcal Z_\sigma)) |\leq \sup_{G\in \mathcal G} |\mathbb E(\sigma^2 G'(X) - XG(X))|.\]

We consider now the distance $\textbf{d}$ defined in the Introduction, which is the distance $d_{\mathcal F}$ when considering
\begin{equation}\label{eq:def_F}
\mathcal F := \{F:\RR\longrightarrow \RR, \|F\|_\infty, \|F''\|_\infty, \|F''\|_\infty, \|F^{(3)}\|_\infty \leq 1\}.
\end{equation}
For this family of functions, we provide in the following proposition the corresponding family of solutions for Stein's equation.

\begin{proposition}\label{prop:Stein_solution} For any $\sigma\geq 0$ and for all $F\in \mathcal F$ there exists $G\in \mathcal G$ which is solution to \eqref{eq:Stein_equation}, where $\mathcal G$ is given by
\begin{equation*}\label{eq:set_G}
    \mathcal G:=\left\{G:\RR\longrightarrow \RR\ :\ \|G\|_\infty \leq 1,\ \|G'\|_\infty \leq \frac{1}{2},\ \|G''\|_\infty \leq \frac{1}{3} \right\}.
\end{equation*}
\end{proposition}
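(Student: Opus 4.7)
The plan is to construct $G$ explicitly via the Ornstein--Uhlenbeck semigroup associated with $\pi_\sigma$, verify Stein's equation in operator form, and then extract the sup-norm bounds from the smoothing property of that semigroup.

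First, I would introduce the OU semigroup $(P_t)_{t\geq 0}$ given by Mehler's formula
$$P_t F(x) = \mathbb E\bigl[F\bigl(e^{-t}x + \sigma\sqrt{1-e^{-2t}}\,\mathcal Z\bigr)\bigr],$$
which makes sense for every $\sigma \geq 0$ (reducing to $F(e^{-t}x)$ when $\sigma=0$), admits $\pi_\sigma$ as its invariant measure and has generator $L G = \sigma^2 G'' - x G'$. Setting
$$f(x) := -\int_0^\infty [P_t F(x)-\pi_\sigma(F)]\,dt \qquad \text{and} \qquad G(x) := f'(x),$$
the identity $\partial_t P_t = L P_t$ combined with the fundamental theorem of calculus gives $L f = F - \pi_\sigma(F)$; differentiating this once in $x$ yields exactly $\sigma^2 G'(x) - x G(x) = F(x)-\pi_\sigma(F)$, i.e.\ Stein's equation~\eqref{eq:Stein_equation}.

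For the bounds, the essential ingredient is the commutation identity $\partial_x P_t = e^{-t} P_t \partial_x$, which is read off directly from Mehler's formula by differentiating under the expectation. Iterating it after differentiating $f$ under the integral produces
$$G(x) = -\int_0^\infty e^{-t} P_t F'(x)\,dt, \quad G'(x) = -\int_0^\infty e^{-2t} P_t F''(x)\,dt, \quad G''(x) = -\int_0^\infty e^{-3t} P_t F'''(x)\,dt.$$
Since each $P_t$ is a Markov operator and hence $L^\infty$-contractive, and $\|F^{(k)}\|_\infty\leq 1$ for $k=1,2,3$, one obtains $\|G\|_\infty \leq \int_0^\infty e^{-t}\,dt = 1$, $\|G'\|_\infty \leq \int_0^\infty e^{-2t}\,dt = 1/2$, and $\|G''\|_\infty \leq \int_0^\infty e^{-3t}\,dt = 1/3$, which is exactly the claim.

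The only delicate points will be the convergence of the integral defining $f$ and the legitimacy of differentiating under it. Both follow from the coupling estimate $|P_t F(x)-\pi_\sigma(F)| \leq e^{-t}\|F'\|_\infty\,\mathbb E|x-\sigma\mathcal Z|$ applied at each derivative level, which provides a dominating function that is locally integrable in $t$ uniformly on compacts in $x$. The degenerate case $\sigma = 0$ is cleaner still: after the change of variable $u = e^{-t}$ the solution collapses to $G(x) = -\int_0^1 F'(ux)\,du$, and the three bounds then follow at once from $\int_0^1 u^{k-1}\,du = 1/k$ with $k=1,2,3$. No step is genuinely hard here---this is essentially a direct application of the OU toolbox---so the main difficulty is merely presenting the semigroup representation in a reasonably self-contained way.
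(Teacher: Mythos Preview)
Your approach is essentially identical to the paper's: both construct the solution via the Ornstein--Uhlenbeck semigroup, set $G$ equal to the derivative of $-\int_0^\infty (P_tF-\pi_\sigma(F))\,dt$, and obtain the three bounds from the commutation $\partial_x P_t = e^{-t}P_t\partial_x$ together with $\int_0^\infty e^{-kt}\,dt=1/k$. Your write-up is in fact slightly more careful than the paper's on two points---you subtract $\pi_\sigma(F)$ so the time integral actually converges, and you treat the degenerate case $\sigma=0$ explicitly---but there is no substantive difference in method.
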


\begin{proof}
    The proof is very standard and we add it here for sake of completeness (see \cite{cloez2019intertwinings} for more details). We consider the alternative problem of finding a function $H:\RR\longrightarrow \RR$ such that 
    \begin{equation}\label{eq:Stein_alternative}
        F(x) - \pi_\sigma (F) = \sigma^2 H''(x) - xH'(x).
    \end{equation}
    We remark that such $H$ holds that $G:=H'$ is solution to \eqref{eq:Stein_equation}, and thus we need to prove the existence of a solution $H$ to \eqref{eq:Stein_alternative} such that $H'\in \mathcal G$. We start by defining the operator $\mathcal L$ given by 
    \[\mathcal Lh(x) = \sigma^2 h''(x) - xh'(x),\]
    and so we can rewrite \eqref{eq:Stein_alternative} as $F-\pi_\sigma(F) = \mathcal LH$. The operator $\mathcal L$ corresponds to the infinitesimal generator of the semigroup $(P_t)_{t\geq 0}$ acting on all bounded function $F:\RR\longrightarrow \RR$ by
    \[P_tF(x) = \int\limits_{-\infty}^{+\infty} F(e^{-t}x + (1-e^{-2t})y)\pi_\sigma(dy).\]
    
    For $F\in \mathcal F$, a solution to \eqref{eq:Stein_alternative} is then given by the function $H:\RR \longrightarrow\RR $ defined by 
    \[H(x) := -\int\limits_0^{+\infty} P_tF(x)dt.\]
    Indeed, by the Dominated Convergence Theorem and Kolmogorov's backward equation
    \begin{align*}
       \mathcal Lh(x) =-\int\limits_0^{+\infty} \mathcal LP_tF(x)dt= -\int\limits_0^{+\infty} \dfrac{d}{dt}P_tF(x)dt= F(x) - \lim_{t\to +\infty} P_tF(x)= F(x) - \pi_\sigma(F),
    \end{align*}
    where for the limit we used that since $F\in \mathcal F$, it is continuous.
    
    Let us now verify that $H'\in \mathcal G$. Again by the Dominated Convergence Theorem we have that 
    \[\|H'\|_\infty \leq \left|\int\limits_0^{+\infty} \dfrac{d}{dx}P_tF(x)dt \right| = \int\limits_0^{+\infty} e^{-t}P_t|F'|(x)dt \leq \|F'\|_\infty \int_0^{+\infty} e^{-t}dt \leq \|F'\|_\infty\leq 1.\]
    Similarly, it follows that $\|h''\|_\infty\leq \frac{1}{2}$ and $\|h'''\|_\infty \leq \frac{1}{3}$. The proof is then complete.
\end{proof}

We are now in position to prove Theorems~\ref{thm:CLT_Martingale}, \ref{thm:CLT_SBC} and~\ref{thm:CLT_CBC}.

\subsection{Proof of the central limit theorems}

We will start with some preliminary computations that work both in all the proofs and we will separate the cases in the end. In order to do this, we define $\mathcal S:\RR_+\longrightarrow\RR_+$ a scale function, with $\mathcal S(t) = e^{\nf{-\lambda t}{2}}$ when $f=h$ and in the small branching case, and $\mathcal S(t)=t^{\nf{-1}{2}}e^{\nf{-\lambda t}{2}}$ in the critical branching case. We also set $\sigma_f^2\geq 0$ as $\sigma_f^2=\sigma_h^2:=\gamma \left(\psi_\infty^{(2)}\right)$ when $f=h$, $\sigma_f^2=\sigma_{f,s}^2=\eta_s\left(f-\gamma(f)h\right)+\gamma(f)^2\gamma\left(\psi_\infty^{(2)}\right)$ in the small branching case and $\sigma_f^2=\sigma_{f,c}^2=\eta_c\left(f-\gamma(f)h\right)$ in the critical branching case. We recall that $\psi_\infty^{(2)}$ is defined in~\eqref{eq:psi} and that $\eta_s$ and $\eta_c$ are defined in Lemma~\ref{lem:kappa}.  

For $f\in \mathcal B(V)$ we introduce the process
\[Y_t:=\mathcal S(t)\left(Z_t(f) - e^{\lambda t}\gamma (f) W\right).\]

Consider $\mathcal Z\sim \mathcal N(0,1)$, independent of $(W_t)_{t\geq 0}$ and $F\in \mathcal F$, with $\mathcal F$ the family of functions defined in \eqref{eq:def_F}. By Proposition~\ref{prop:Stein_solution}, for each $t>0$, there exists a (random) solution $G_t$ to \eqref{eq:Stein_equation} with $\sigma^2=\sigma^2_fW_t$, which is $\mathcal F_t$-measurable (with $(\mathcal F_t)_{t\geq 0}$ the natural filtration of the process) and such that $G_t\in \mathcal G$ almost surely, with 
\[ \mathcal G:=\left\{G:\RR\longrightarrow \RR\ :\ \|G\|_\infty \leq 1,\ \|G'\|_\infty \leq \frac{1}{2},\ \|G''\|_\infty \leq \frac{1}{3} \right\}.\]
In other words, for $t,r>0$ we have
\begin{equation}\label{eq:Stein_comparison}
\left|\mathbb E_x(F(Y_{t+r}))- \mathbb E_x\left(F\left(\sigma_f\sqrt{W_t}\mathcal Z\right)\right)\right| = \left|\mathbb E_x \left(Y_{t+r}G_t(Y_{t+r}) - \sigma_f^2 W_t G_t'(Y_{t+r})\right)\right|.
\end{equation}
We proceed now to bound the right-hand side.

For $t>0$ and for each $u\in \alives_t$ we set $W^{u,t}:=\lim\limits_{\ell\to +\infty}e^{\lambda \ell}Z^u_{t,t+\ell}(h)$, then the branching property entails that for all $t,r>0$
\[Y_{t+r}=\mathcal S(t+r) \sum_{u\in \alives_t} A_{t,t+r}^u = \mathcal S(t+r)\sum_{u\in \alives_t}\underbrace{\left(Z_{t,t+r}^u(f) - e^{\lambda r}\gamma(f)W^{u,t}\right)}_{=:A_{t,t+r}^u}.\]

For $u\in \alives_t$ we also define
\[Y_{t,r}^{(-u)} = \mathcal S(t+r)\sum_{\substack{v\in \alives_t\\ v\neq u}} A_{t,t+r}^v = Y_{t+r}-\mathcal S(t+r) A_{t,t+r}^u,\]
and we remark that the branching property implies that $Y_{t,r}^{(-u)}$ is independent of $A_{t,t+r}^u$ conditionally on $\mathcal F_t$.

We have that for all $x\in \mathcal X$,
\begin{align}
    &\quad \left|\mathbb E_x\left(Y_{t+r}G_t(Y_{t+r})-\sigma_f^2 W_tG'_t(Y_{t+r})\right)\right| \label{eq:TCL_bound}\\
    &\leq \mathbb E_x\left(\left|\mathbb E_x(Y_{t+r}G_t(Y_{t+r})\mid \mathcal F_t)-\mathcal S(t+r)^2\sum_{u\in \alives_t}\mathbb E_x\left(\left.(A_{t,t+r}^u)^2G'_t\left(Y_{t,r}^{(-u)}\right)\right|\mathcal F_t\right)\right|\right)\nonumber \\
    &\qquad +\mathbb E_x\left(\left|\mathcal S(t+r)^2\sum_{u\in \alives_t}\mathbb E_x\left(\left.(A_{t,t+r}^u)^2G'_t\left(Y_{t,r}^{(-u)}\right)\right|\mathcal F_t\right)-\sigma_f^2W_t\mathbb E_x(G'_t(Y_{t+r})\mid\mathcal F_t)\right|\right).\nonumber
\end{align}

We proceed to bound each of the two terms. To bound the first one, by using a Taylor expansion we can ensure the existence for each $u\in \alives_t$ of a random variable $\xi_{u,t,r}\in \left[\min\{Y_{t+r},Y_{t,r}^{(-u)}\},\max\{Y_{t+r},Y_{t,r}^{(-u)}\}\right]$ such that, almost surely,
\begin{align*}
    Y_{t+r}&G_t(Y_{t+r}) = \mathcal S(t+r) \sum_{u\in \alives_t}A_{t,t+r}^uG_t(Y_{t+r})\\
    &=\mathcal S(t+r)\sum_{u\in \alives_t} A_{t,t+r}^u\left(G_t\left(Y_{t,r}^{(-u)}\right) + \mathcal S(t+r)A_{t,t+r}^uG'_t\left(Y_{t,r}^{(-u)}\right) + \frac{1}{2}\mathcal S(t+r)^2G''_t(\xi_{u,t,r})\right)\\
    &= \mathcal S(t+r)\sum_{u\in \alives_t}A_{t,t+r}^uG_t\left(Y_{t,r}^{(-u)}\right) + \mathcal S(t+r)^2 \sum_{u\in \alives_t}(A_{t,t+r}^u)^2G'_t\left(Y_{t,r}^{(-u)}\right)\\
    &\qquad +\frac{1}{2}\mathcal S(t+r)^3\sum_{u\in \alives_t}(A_{t,t+r}^u)^3G''_t(\xi_{u,t,r}).
\end{align*}
Hence, by taking conditional expectation with respect to $\mathcal F_t$, using that $G_t$ is $\mathcal F_t$-measurable and the conditional independence of $A_{t,t+r}^u$ and $Y_{t,r}^{(-u)}$, we have that almost surely

\begin{align}
    & \left|\mathbb E_x(Y_{t+r}G_t(Y_{t+r})\mid \mathcal F_t)-\mathcal S(t+r)^2\sum_{u\in \alives_t}\mathbb E_x\left(\left.(A_{t,t+r}^u)^2G'_t\left(Y_{t,r}^{(-u)}\right)\right|\mathcal F_t\right)\right| \nonumber\\
    &\qquad \leq \|G_t\|_\infty \mathcal S(t+r)\sum_{u\in \alives_t}\left|\mathbb E_x(A_{t,t+r}^u\mid \mathcal F_t)\right|+\frac{1}{2}\|G''_t\|_\infty \mathcal S(t+r)^3 \sum_{u\in \alives_t} \mathbb E_x\left(|A_{t,t+r}^u|^3\mid \mathcal F_t\right). \label{eq:TCL_bound2}
\end{align}

For the first term in the last expression we can use that, thanks to the branching property, the fact that $(W_t)_{t\geq 0}$ is a closed martingale and Assumption~\ref{assu:eigenelements}, we have that
\[\left|\mathbb E_x(A_{t,t+r}^u\mid\mathcal F_t)\right|=e^{\lambda r}\left|e^{-\lambda r}\mathfrak M_rf(X_t^u)-\gamma (f)h(X_t^u)\right|\leq e^{(\lambda -\rho)r}\|f\|_VV(X_t^u).\]
In addition, if $f=h$, using that $h$ is an eigenfunction for the semigroup one has that in that case
\[\left|\mathbb E_x(A_{t,t+r}^u\mid \mathcal F_t) \right| = 0,\]
and thus
\begin{align*}
\|G_t\|_\infty \mathcal S(t+r)\sum_{u\in \alives_t}\left|\mathbb E_x(A_{t,t+r}^u\mid \mathcal F_t)\right|\leq \mathcal S(t+r)\mathds{1}_{f\neq h}e^{(\lambda-\rho)r}\|f\|_V Z_t(V),    
\end{align*}
where we also used that $\|G_t\|_\infty\leq 1$ almost surely. For the second term in \eqref{eq:TCL_bound2} we have that, since $\EE_x\left(|A_{t,t+r}^u|^3\mid \mathcal F_t\right)=\mathfrak F_r^{(3)}[f](X_t^u)$ (see~\eqref{eq:goticF}) almost surely and by applying Lemma~\ref{lem:fluctuations_order3}, there exists $C_1>0$ such that
\begin{align*}
    \frac{1}{2}\|G''_t\|_\infty \mathcal S(t+r)^3 \sum_{u\in \alives_t} \mathbb E_x\left(|A_{t,t+r}^u|^3\mid \mathcal F_t\right) \leq C_1\dfrac{1}{6}\dfrac{\mathcal S(t+r)^3}{\mathcal S(r)^3}Z_t(V^3),
\end{align*}
where we now used that $\|G_t''\|_\infty\leq 1/3.$
Thus, by taking expectation on \eqref{eq:TCL_bound2} and applying Assumption~\ref{assu:eigenelements}, we have that there exists $C_2>0$ such that 
\begin{align}
    & \mathbb E_x\left(\left|\mathbb E_x(Y_{t+r}G_t(Y_{t+r})\mid \mathcal F_t)-\mathcal S(t+r)^2\sum_{u\in \alives_t}\mathbb E_x\left(\left.(A_{t,t+r}^u)^2G'_t\left(Y_{t,r}^{(-u)}\right)\right|\mathcal F_t\right)\right|\right) \nonumber \\
    &\qquad \leq C_2e^{\lambda t}\left(e^{(\lambda - \rho)r}\mathcal S(t+r)\mathds{1}_{f\neq h}V(x) + \frac{\mathcal S(t+r)^3}{\mathcal S(r)^3}V^3(x)\right) .\label{eq:first_bound}
\end{align}

We now provide a bound for the second term in \eqref{eq:TCL_bound}. Using again the independence of $A_{t,t+r}^u$ and $Y_{t,r}^{(-u)}$ conditionally to $\mathcal F_t$ and recalling that $W_t=e^{-\lambda t}Z_t(h)$, we have
\begin{align*}
   & \mathbb E_x\left(\left|\mathcal S(t+r)^2\sum_{u\in \alives_t}\mathbb E_x\left(\left.(A_{t,t+r}^u)^2G'\left(Y_{t,r}^{(-u)}\right)\right|\mathcal F_t\right)-\sigma_f^2W_t\mathbb E_x(G'(Y_{t+r})\mid\mathcal F_t)\right|\right)\\
   & \qquad \leq \mathbb E_x\left(\left|\mathcal S(t+r)^2 \sum_{u\in \alives_t} \mathbb E_x((A_{t,t+r}^u)^2\mid\mathcal F_t)-\sigma_f^2e^{-\lambda t}\sum_{u\in \alives_t} h(X_t^u)\right|\mathbb E_x\left(\left.\left|G'\left(Y_{t,r}^{(-u)}\right)\right|\ \right| \mathcal F_t\right)\right)\\
   &\qquad + \sigma_f^2e^{-\lambda t}\mathbb E_x\left(\sum_{u\in \alives_t}h(X_t^u) \mathbb E_x\left(\left.\left|G'\left(Y_{t,r}^{(-u)}\right)-G'(Y_{t+r})\right|\ \right| \mathcal F_t\right)\right).
\end{align*}
Using that $\mathbb E_x((A_{t,t+r}^u)^2\mid \mathcal F_t) = \mathfrak F_r^{(2)}[f](X_t^u)$ and that $\|G'_t\|_\infty \leq 1/2$ almost surely, the first expectation is bounded by 
\begin{equation}\label{eq:second_bound1}
\dfrac{1}{2} e^{-\lambda t}\mathbb E_x\left(\sum_{u\in \alives_t}\left|e^{\lambda t}\mathcal S(t+r)^2\mathfrak F_r^{(2)}[f](X_t^u)-\sigma_f^2h(X_t^u)\right|\right),
\end{equation}
whilst by using the Mean Valued Theorem, Jensen's inequality and that $\|G_t''\|_\infty\leq 1/3$ almost surely, we can bound the second expectation by
\begin{align}
\sigma_f^2e^{-\lambda t}&\mathbb E_x\left(\sum_{u\in \alives_t} h(X_t^u) \|G''\|_\infty\mathbb E_x\left(\left.\left|Y_{t,r}^{(-u)}-Y_{t+r}\right|\ \right|\mathcal F_t\right)\right)\nonumber \\
& \leq \dfrac{1}{3} \sigma_f^2e^{-\lambda t}\mathcal S(t+r)\mathbb E_x\left(\sum_{u\in \alives_t}h(X_t^u)\mathfrak F_r^{(1)}[f](X_t^u)\right)\nonumber \\
&\leq \dfrac{1}{3} \sigma_f^2 e^{-\lambda t} \mathcal S(t+r) \mathbb E_x\left(\sum_{u\in \alives_t} V(X_t^u) \left(\mathfrak F_r^{(2)}[f](X_t^u)\right)^{\nf{1}{2}}\right). \label{eq:second_bound2}
\end{align}

In conclusion, we have that the right-hand side of \eqref{eq:Stein_comparison} is bounded by \eqref{eq:first_bound}+\eqref{eq:second_bound1}+\eqref{eq:second_bound2}. We bound each of these three terms within the proof of each theorem separately.

\begin{proof}[Proof of Theorem~\ref{thm:CLT_Martingale}] In this case $f=h$, $\mathcal S(t) = e^{-\nf{\lambda t}{2}}$ and $\sigma_h^2 = \gamma\left(\psi_\infty^{(2)}\right)$. He have the bound in \eqref{eq:first_bound} is equal to 
\[C_2e^{-\nf{\lambda t}{2}}V^3(x),\]
whilst using \eqref{eq:fluctuation_decomp}, we can rewrite \eqref{eq:second_bound1} as
\begin{align*}
    \dfrac{1}{2}e^{-\lambda t}\mathbb E_x&\left(\sum_{u\in \alives_t} \left|e^{-\lambda r}\mathfrak F_r^{(2)}[h](X_t^u) - \sigma_h^2h(X_t^u)\right|\right)\\
    & = \dfrac{1}{2}e^{-\lambda t} \mathbb E_x\left( \sum_{u\in \alives_t} \left|e^{-\lambda r}\moment_r\psi_\infty^{(2)}(X_t^u) - \gamma\left(\psi_\infty^{(2)}\right)h(X_t^u)\right|\right)\\
    &\leq \dfrac{a_2}{2}\|\psi_\infty^{(2)}\|_V^2e^{-\rho r} e^{-\lambda t}\moment_tV^2(x) \\
    &\leq C_3 e^{-\rho r}V^2(x),
\end{align*}
where we used Assumption~\ref{assu:eigenelements} twice and $C_3>0$. Finally, using that
\[\mathfrak F_r^{(2)}[h](x) = \moment_r\psi_\infty^{(2)}(x) \leq C_4 e^{\lambda r}V^2(x),\]
for some $C_4>0$, we have for \eqref{eq:second_bound2},
\begin{align*}
    \dfrac{1}{3}\sigma_h^2 e^{-\nf{3\lambda t}{2}}e^{-\nf{\lambda r}{2}} \mathbb E_x\left(\sum_{u\in \alives_t}V(X_t^u)\left(\mathfrak F_r^{(2)}[h](X_t^u)\right)^{\nf{1}{2}}\right)&\leq \dfrac{C_4}{3}\sigma_h^2 e^{-\nf{3\lambda t}{2}}\moment_tV^2(x)\\
    &\leq C_5e^{-\nf{\lambda t}{2}}V^2(x),
\end{align*}
for some $C_5>0$. Combining these bounds in \eqref{eq:TCL_bound} and using \eqref{eq:Stein_comparison} we conclude that there exists $C_6>0$ such that for all $F \in \mathcal F$ and all $x\in \mathcal X$
\[\sup_{F\in \mathcal F} \left|\mathbb E_x\left(F(Y_{t+r}\right) - \mathbb E_x\left(F(\sigma_{h}\sqrt{W_t}\mathcal Z\right)\right| \leq C_6\left(e^{\nf{-\lambda t}{2}} + e^{-\rho r}\right)\overline{V}^{(3)}(x),\]
with $\overline{V}^{(3)}(x) = \max\limits_{k\leq 3}V^k(x)$. On the other hand, there exists $C_7>0$ such that for $F\in \mathcal F$,
\begin{align*} 
\left|\mathbb E_x\left(F(\sigma_{h}\sqrt{W_t}\mathcal Z)\right) - \mathbb E_x\left(F(\sigma_{h}\sqrt{W}\mathcal Z)\right)\right|&\leq \|F'\|_\infty \sigma_{h}\mathbb E(|\mathcal Z|)\mathbb E_x(|\sqrt{W_t}-\sqrt{W}|)\\
&\leq  \sigma_{h}\mathbb E(|\mathcal Z|) \mathbb E_x\left(\sqrt{|W_t-W|}\right)\\
& \leq  \sigma_{h}\mathbb E(|\mathcal Z|) \mathbb E_x\left((W_t-W)^2\right)^{\nf{1}{4}}\\
&\leq C_7e^{-\nf{\lambda t}{4}}\sqrt{V(x)}. 
\end{align*}

In conclusion, there exists $C>0$ such that for all $r,t>0$ and all $x\in \mathcal X$
\[\textbf{d}\left(Y_{t+r}, \sigma_{h}\sqrt{W}\mathcal Z\right) = \sup_{F\in \mathcal F} \left|\mathbb E_x\left(F(Y_{t+r}\right) - \mathbb E_x\left(F(\sigma_{h}\sqrt{W}\mathcal Z\right)\right| \leq C \left(e^{\nf{-\lambda t}{2}} + e^{-\rho r}\right)V^*(x),\]
with $V^*(x) = \max\{\overline{V}^{(3)}(x),\sqrt{V(x)}\}$. Hence, for all $t>0$ and all $x\in \mathcal X$,
\[\textbf{d}\left(Y_t,\sigma_{h}\sqrt{W}\mathcal Z\right)\leq C \inf_{\substack{t_1,t_2>0\\ t_1+t_2=t}}\left(e^{\nf{-\lambda t_1}{2}} + e^{-\rho t_2}\right)V^*(x)\leq K\exp\left(-\dfrac{\lambda \rho}{2\rho + \lambda}t\right)V^*(x),\]
for some $K>0$, and this ends the proof.
\end{proof}

\begin{remark}\label{rem:weaker_assumption}
    A careful reading of the proof of Theorem~\ref{thm:CLT_Martingale} shows that the result holds with a weaker assumption on the convergence of the semigroup. One can assume that for all $k\leq \kappa$,
    \[\left|e^{-\lambda t}\moment_t f(x) - \gamma(f)h(x)\right|\leq R(t)\|f\|_V^k V^k(x),\]
    where $R:\RR_+\longrightarrow\RR_+$ is a function that verifies $\lim\limits_{t\to +\infty} R(t) = 0$.
\end{remark}
\begin{proof}[Proof of Theorem~\ref{thm:CLT_SBC}] In this case $2\rho >\lambda$ and $\mathcal S(t) = e^{\nf{-\lambda t}{2}}$. The bound in \eqref{eq:first_bound} reads
\[C_2\left(e^{(\nf{\lambda}{2}-\rho)r}e^{\nf{\lambda t}{2}}V(x)+ e^{\nf{-\lambda t}{2}}V^3(x)\right),\]
whilst for the bound on \eqref{eq:second_bound1}, by applying Lemma~\ref{lem:speed_fluctuations2} and Assumption~\ref{assu:eigenelements}, there exists $C_8>0$ such that
\begin{align*}
    \dfrac{1}{2} e^{-\lambda t}\mathbb E_x\left(\sum_{u\in \alives_t}\left|e^{\lambda t}\mathcal S(t+r)^2\mathfrak F_r^{(2)}[f](X_t^u)-\sigma_f^2h(X_t^u)\right|\right)
    &= \dfrac{1}{2} e^{-\lambda t}\mathbb E_x\left(\sum_{u\in \alives_t} \left|e^{-\lambda r} \mathfrak F_r^{(2)}[f](X_t^u)-\sigma_{f,s}^2h(X_t^u)\right|\right)\\
    &\leq C_8e^{(\nf{\lambda}{2}-\rho)r} V^2(x).
\end{align*}

Finally, if we decompose $\mathfrak F_r^{(2)}[f]$ as in \eqref{eq:fluctuation_decomp}, we have that for all $x\in \mathcal X$ by Proposition~\ref{prop:moments} there exists $C_9>0$ such that
\[\mathfrak F_r^{(2)}[f](x) = \momenttwo_r[\widehat f](x) + \gamma(f)^2\moment_r\psi_\infty^{(2)}(x)\leq C_9 e^{\lambda r}V^2(x).\]
and thus we have for \eqref{eq:second_bound2} that there exists $C_{10}>0$ such that
\begin{align*}
    \dfrac{1}{3} \sigma_f^2 e^{-\lambda t} \mathcal S(t+r) \mathbb E_x\left(\sum_{u\in \alives_t} V(X_t^u) \left(\mathfrak F_r^{(2)}[f](X_t^u)\right)^{\nf{1}{2}}\right)& = \dfrac{1}{3}\sigma_{f,s}^2 e^{-\nf{3\lambda t}{2}}e^{-\nf{\lambda r}{2}}\mathbb E_x\left(\sum_{u\in \alives_t} V(X_t^u) \left(\mathfrak F_r^{(2)}[f](X_t^u)\right)^{\nf{1}{2}}\right)\\
    &\leq \dfrac{C_4}{3}\sigma_{f,s}^2e^{-\nf{3\lambda t}{2}}\moment_tV^2(x)\\
    &\leq C_{10}e^{-\nf{\lambda t}{2}}V^2(x),
\end{align*}
where in the last inequality we used Assumption~\ref{assu:eigenelements}.
Proceeding as in the previous proof, there exists $C>0$ such that for all $t>0$ and all $x\in \mathcal X$,
\[\textbf{d}\left(Y_t,\sigma_{f,s}\sqrt{W}\mathcal Z\right)\leq C \inf_{\substack{t_1,t_2>0\\ t_1+t_2=t}}\left(e^{(\nf{\lambda}{2}-\rho)t_1}e^{\nf{\lambda t_2}{2}}+e^{-\nf{\lambda t_2}{2}}\right)V^*(x)\leq K\exp\left(\dfrac{\lambda(\lambda - 2\rho)}{2(\lambda + 2\rho)}t\right)V^*(x),\]
for some $K>0$, and this ends the proof.
\end{proof}
\begin{proof}[Proof of Theorem~\ref{thm:CLT_CBC}] In this case $2\rho = \lambda$ and $\mathcal S(t) = t^{\nf{-1}{2}}e^{\nf{-\lambda t}{2}}$. We will prove that the quantities on \eqref{eq:first_bound} \eqref{eq:second_bound1} and \eqref{eq:second_bound2} converge to zero when $r$ and $t$ go to infinity. A similar reasoning as in the previous proof will allow us to conclude the result.

We start with \eqref{eq:first_bound}, which is bounded by 
\[C_2\left(\dfrac{e^{\nf{\lambda t}{2}}}{\sqrt{t+r}}V(x) + e^{-\nf{-\lambda t}{2}}\sqrt{\dfrac{r}{t+r}}^3V^3(x)\right),\]
which tends to zero when $r\to +\infty$ and then $t\to +\infty$.

To deal with \eqref{eq:second_bound1} and \eqref{eq:second_bound2} we start by pointing out that there exists $C_{11}>0$ such that for all $r>0$ and all $x\in \mathcal X$ 
\begin{align}\label{eq:bound_fluct_CBC}
    \mathfrak F_r^{(2)}[f](x) &= \momenttwo_r[\widehat f](x) + \gamma(f)^2\moment_r\psi_\infty^{(2)}(x) \nonumber\\
    &\leq C_{11}e^{\lambda r}(1+r)V^2(x),
\end{align}
where the equality comes from \eqref{eq:fluctuation_decomp} and the inequality comes from applying Proposition~\ref{prop:moments} in the critical branching case and Assumption~\ref{assu:eigenelements}. We also define 
\[\Phi_r[f](x) =\left|\frac{e^{-\lambda r}}{r}\mathfrak F_r^{(2)}[f](x) - \sigma_{f,c}^2h(x)\right|,\]
which tends to zero when $r$ goes to infinity, according to Corollary~\ref{cor:fluctuations_order2}. Moreover, using \eqref{eq:bound_fluct_CBC} one has that there exists $C_{12}>0$ such that
\begin{equation}\label{eq:upper_bound_Phi}
    \forall r>0, \forall x\in \mathcal X, \Phi_r[f](x) \leq C_{12}V^2(x).
\end{equation}

Now we deal with \eqref{eq:second_bound1}, which is equal to 
\begin{multline*}
    \dfrac{1}{2}e^{-\lambda t} \mathbb E_x\left(\sum_{u\in \alives_t}\left|\dfrac{e^{-\lambda r}}{t+r}\mathfrak F_r^{(2)}[f](X_t^u)-\sigma_{f,c}^2h(X_t^u)\right|\right)  = \dfrac{1}{2}e^{-\lambda t} \mathbb E_x\left(\sum_{u\in \alives_t} \Phi_r[f](X_t^u)\right)\\
    +\dfrac{1}{2}e^{-\lambda t}\left(\dfrac{t}{r(t+r)}\right)e^{-\lambda r}\mathbb E_x\left(\sum_{u\in \alives_t}\mathfrak F_r^{(2)}[f](X_t^u)\right).
\end{multline*}
The first term on the right-hand side tends to zero when $r$ goes to infinity thanks to \eqref{eq:upper_bound_Phi} and the Dominated Convergence Theorem. By \eqref{eq:bound_fluct_CBC} the second term is bounded by 
\[C_{12}\dfrac{1}{2}e^{-\lambda t}\dfrac{t(1+r)}{r(t+r)}V^2(x),\]
which also tends to zero when $r$ goes to infinity.

Finally, we deal with \eqref{eq:second_bound2} by using again \eqref{eq:bound_fluct_CBC} and Assumption~\ref{assu:eigenelements}, obtaining that there exists $C_{13}>0$ such that
\begin{align*}
    \dfrac{1}{3}\sigma_{f,c}^2 \dfrac{e^{-\nf{3\lambda t}{2}}e^{-\nf{\lambda r}{2}}}{\sqrt{t+r}}\mathbb E_x\left(\sum_{u\in \alives_t}V(X_t^u) \left(\mathfrak F_r^{(2)}[f](X_t^u)\right)^{\nf{1}{2}}\right)&\leq \dfrac{C_8}{3}\sigma_{f,c}^2 e^{-\nf{3\lambda t}{2}}\sqrt{\dfrac{1+r}{t+r}}\moment_tV^2(x)\\
    &\leq C_{13}e^{-\nf{\lambda t}{2}}\sqrt{\dfrac{1+r}{t+r}}V^2(x),
\end{align*}
which goes to zero when first $r$ and then $t$ go to infinity.

In conclusion, reasoning as in the proof of Theorem~\ref{thm:CLT_SBC}, we have that 
\[\lim_{t\to +\infty}\lim_{r\to +\infty} \textbf{d}\left(Y_{t+r},\sigma_{f,c}\sqrt{W}\mathcal Z\right)=0,\]
which finishes the proof.
\end{proof}

\section{Applications}\label{sec:applications}

As said in introduction, almost all examples detailed in \cite{bansaye2020ergodic, bansaye2022non, champagnat2023general, del2002stability,ferre2021more,sanchez2023krein, meyn1993stability,canizo2023harris, bertoin2019feynman, hairer2011yet} and on their extensions verifies our main assumptions of semigroup convergence. Consequently, under moment assumptions on the branching mechanisms, our main theorem applies. In particular, up to additionnal assumptions on the moments of the reproduction law, we capture and extend examples of \cite{kesten1966limit,adamczak2015clt,ren2014central,ren2017central} such as branching diffusion processes. Instead of detailing these examples, let us detail here an example where the semigroup does not admits a regular density with respect to the Lebesgue measure.

\subsection{The house of cards model}\label{ex:HoC}

We consider a branching model where each particle has a trait $x\in \mathcal{X}= [0,1]$ whose traits remain constant between branching events. They then branch at a exponential time of parameter $r(x)$; namely the particle is replaced by $k$ new particles with same trait, with some probability $p_k(x)$. We further assume that at rate $1$, each particle survives but gives birth to new individuals whose traits are uniformly distributed over $[0,1]$. See \cite{kingman1978simple} for motivation on this classical model of mutations.

We assume that the number of offspring is bounded; that is there exists $\kappa$ such $\forall k >\kappa, \ p_k(x) =0$. We moreover assume that $x\mapsto p_k(x)$ and $x\mapsto r(x)$ are continuous and then bounded. This assumption trivially ensures the existence of our process since the number of individuals can be bounded by a multi-type branching process. This coupling argument assures our moment assumptions, Assumption~\ref{assu:moments}, with $V\equiv 1$ and $\kappa=4$.

In this case, the mean semigroup $(S_t)$  associated to this dynamics is generated by
$$
\mathcal{A} f(x) = \int_0^1 f(u) du + r(x) \sum_{k\geq 0} (k-1) p_k(x) f(x).
$$
This semigroup was studied in \cite{cloez2024fast} and is, in particular, less regular than in other contexts where the central limit theorem is generally proved. For instance, in contrast to diffusion processes,  this semi-group does not lead to absolutely continuous measure with respect to the Lebesgue measure. However, setting $\alpha(x)=-r(x) \sum_{k\geq 0} (k-1) p_k(x)$, up to a scaling by $e^{-\alpha(0) t}$, this semigroup was studied in \cite{cloez2024fast} (where $a(x) = \alpha(x)-\alpha(0)$). In our setting, \cite[Theorem 1.1]{cloez2024fast} rewrites as follow.

\begin{thm}\label{thm:HoC}
\label{th:cvhc}
Let us assume that for all $x\in (0,1]$,
\begin{equation}\label{eq:HoC_condition_1}
    \sum_{k\geq 0}kp_k(x) < 1-\dfrac{\alpha(0)}{r(x)},
\end{equation}
and that 
\begin{equation}\label{eq:HoC_condition_2}
    \int_0^1 \dfrac{1}{\alpha (x) - \alpha (0)} dx>1.
\end{equation}
then Assumption~\ref{assu:eigenelements} holds with $V\equiv 1$, $\rho = \lambda +\alpha(0)$ ,  for the unique $\lambda>-\alpha(0)$ such that
\[
\int_0^1\frac{1}{\lambda + \alpha(x) } dx =1.
\] 
Moreover,
$h(x)\propto \frac{1}{\lambda +\alpha(x)}$ and $\gamma(dx)= \frac{dx}{\lambda +\alpha(x)}$.

\end{thm}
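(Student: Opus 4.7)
The plan is to reduce the statement to an application of \cite[Theorem 1.1]{cloez2024fast}, which is devoted to exactly this semigroup. Setting $\alpha(x) := -r(x) \sum_{k\geq 0}(k-1) p_k(x)$ (continuous and bounded on $[0,1]$), the mean semigroup generator rewrites as $\mathcal A f(x) = \int_0^1 f(u)\, du - \alpha(x) f(x)$. Since $V \equiv 1$, every space $\mathcal B(V^k)$ coincides with the space of bounded measurable functions, so Assumption~\ref{assu:eigenelements} reduces to a single sup-norm convergence estimate.

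As a first step, I would heuristically identify the eigenelements in order to match notation. Solving $\mathcal A h = \lambda h$ yields $(\lambda + \alpha(x))\, h(x) = \int_0^1 h(u)\, du$, so any bounded positive eigenfunction must be proportional to $(\lambda + \alpha(\cdot))^{-1}$, and $\lambda$ must satisfy the characteristic equation $\int_0^1 (\lambda + \alpha(u))^{-1}\, du = 1$. Condition \eqref{eq:HoC_condition_1} rewrites as $\alpha(x) > \alpha(0)$ for every $x \in (0,1]$, which makes $\lambda \mapsto \int_0^1(\lambda + \alpha(u))^{-1}\, du$ a continuous, strictly decreasing map on $(-\alpha(0), +\infty)$ that tends to $0$ at $+\infty$ and strictly exceeds $1$ near $-\alpha(0)$ by condition \eqref{eq:HoC_condition_2}; a unique $\lambda > -\alpha(0)$ is thus selected. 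A dual computation on $\gamma \mathcal A = \lambda \gamma$ with $\gamma(dx) = g(x)\, dx$ forces $(\lambda + \alpha(x))\, g(x) = \int_0^1 g(y)\, dy$, whence $\gamma(dx) \propto (\lambda + \alpha(x))^{-1}\, dx$; the normalisations $\gamma(h) = \|h\|_\infty = 1$ then fix the proportionality constants.

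The second and main step is to upgrade these formal computations into the quantitative exponential convergence at rate $\rho = \lambda + \alpha(0)$. This is exactly the content of \cite[Theorem 1.1]{cloez2024fast}, applied to the rescaled semigroup $e^{-\alpha(0) t} S_t$ after the affine change of variable $a(x) = \alpha(x) - \alpha(0)$: the positivity hypothesis $a \geq 0$ with $a(0)=0$ corresponds to condition \eqref{eq:HoC_condition_1}, the integrability/spectral-gap hypothesis is precisely condition \eqref{eq:HoC_condition_2}, and the rate produced there, translated back, is exactly $\rho = \lambda + \alpha(0)$. The main obstacle is really bookkeeping: matching the scaling conventions and normalisations of $h$ and $\gamma$ between the two papers, and checking that the moment control needed in the background (Assumption~\ref{assu:moments} with $V \equiv 1$ and any $\kappa$) is ensured by the bounded-offspring coupling with a multi-type branching process sketched just before the statement. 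The genuine analytical difficulty (the semigroup does not produce densities with respect to Lebesgue measure, so neither Doeblin nor classical Harris arguments apply directly) is already handled in \cite{cloez2024fast} by resolvent analysis exploiting the rank-one-plus-multiplication structure of $\mathcal A$.
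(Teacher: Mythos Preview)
Your proposal is correct and follows essentially the same route as the paper: both reduce to \cite[Theorem 1.1]{cloez2024fast} via the affine shift $a(x)=\alpha(x)-\alpha(0)$, checking that \eqref{eq:HoC_condition_1} is exactly the positivity condition $a(x)>0$ on $(0,1]$ with $a(0)=0$, and that \eqref{eq:HoC_condition_2} is the spectral-gap hypothesis needed there. Your additional heuristic derivation of the eigenelements and of the uniqueness of $\lambda$ is a welcome elaboration, but the core argument is the same.
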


\begin{proof} The idea is to apply Theorem 1.1 in \cite{cloez2024fast}. We remark that in particular we need $\alpha(0)= 0$, so we start by defining for all $f \in L^1([0,1])$
\[\widehat{\mathcal A}f(x) = \mathcal Af(x) - \alpha (0) = \int\limits_0^1f(u)du - \widehat{\alpha}(x)f(x),\]
with $\widehat{\alpha}(x) = \alpha (x) -\alpha (0)$. Let us verify that $\widehat{\mathcal A}$ verifies the assumptions of Theorem 1.1. We have that, by construction, $\widehat{\alpha}(0)=0$, whilst condition \eqref{eq:HoC_condition_1} ensures that $\widehat{\alpha}(x)>0$ for all $x\in (0,1]$. Hence, condition \eqref{eq:HoC_condition_2} allows is to apply the first part of Theorem 1.1 to $\widehat{\mathcal A}$, which ensures that Assumption~\ref{assu:eigenelements} with $V\equiv 1$. The relation between $\widehat{\mathcal A}$ and $\mathcal A$ allows us to conclude the proof.
    
\end{proof}

As a direct consequence we have the following results.

\begin{corollary}\label{cor:HoC_SBC}
    Assume that \eqref{eq:HoC_condition_1} holds and that 
    \begin{equation}\label{eq:Cor_HoC1}
    \int_0^1\frac{1}{\alpha (x) }dx >1\text{ and }\int_0^1\frac{1}{\alpha (x) - 2\alpha (0)} >1.
    \end{equation}
    Then Theorem~\ref{thm:CLT_SBC} holds.
\end{corollary}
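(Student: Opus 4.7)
The plan is to verify all three hypotheses of Theorem~\ref{thm:CLT_SBC}: Assumption~\ref{assu:moments} with $\kappa\geq 4$, Assumption~\ref{assu:eigenelements}, and the strict inequality $2\rho>\lambda$. The moment assumption with $V\equiv 1$ and $\kappa=4$ was already observed to hold in the preamble of Theorem~\ref{thm:HoC} through a coupling of the particle count with a multi-type branching process with bounded offspring, so I would just invoke this once and move on.

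For Assumption~\ref{assu:eigenelements} I would apply Theorem~\ref{thm:HoC}. Its first hypothesis \eqref{eq:HoC_condition_1} is assumed; to deduce its second hypothesis \eqref{eq:HoC_condition_2}, namely $\int_0^1 (\alpha(x)-\alpha(0))^{-1}\,dx>1$, from \eqref{eq:Cor_HoC1}, I would split on the sign of $\alpha(0)$. If $\alpha(0)\leq 0$, then $\alpha(x)-\alpha(0)\leq \alpha(x)-2\alpha(0)$ on $[0,1]$, so $\int_0^1 (\alpha(x)-\alpha(0))^{-1}\,dx\geq \int_0^1(\alpha(x)-2\alpha(0))^{-1}\,dx>1$ by the second half of \eqref{eq:Cor_HoC1}. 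If instead $\alpha(0)\geq 0$, then $\alpha(x)-\alpha(0)\leq \alpha(x)$ and the first half of \eqref{eq:Cor_HoC1} yields the same conclusion. Theorem~\ref{thm:HoC} then delivers the eigenelements, together with the identity $\rho=\lambda+\alpha(0)$, where $\lambda$ is the unique point of $(-\alpha(0),+\infty)$ at which the strictly decreasing function $\Phi(\mu):=\int_0^1 (\mu+\alpha(x))^{-1}\,dx$ equals $1$.

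Finally, the small branching condition $2\rho>\lambda$ becomes, via $\rho=\lambda+\alpha(0)$, the inequality $\lambda>-2\alpha(0)$, which I would verify once more by cases. When $\alpha(0)\geq 0$ the first half of \eqref{eq:Cor_HoC1} reads $\Phi(0)>1=\Phi(\lambda)$, so strict monotonicity of $\Phi$ forces $\lambda>0\geq -2\alpha(0)$. When $\alpha(0)<0$ the point $-2\alpha(0)$ lies in the domain $(-\alpha(0),+\infty)$ of $\Phi$, and the second half of \eqref{eq:Cor_HoC1} reads $\Phi(-2\alpha(0))>1$, whence $\lambda>-2\alpha(0)$ by the same monotonicity. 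With all three hypotheses in hand, Theorem~\ref{thm:CLT_SBC} applies. I do not expect any real obstacle; the only bookkeeping is to check that the denominators $\alpha(x)-\alpha(0)$, $\alpha(x)$ and $\alpha(x)-2\alpha(0)$ remain positive on $(0,1]$ in the regimes where they appear, which follows from \eqref{eq:HoC_condition_1} and continuity of $\alpha$.
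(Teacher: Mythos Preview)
Your proof is correct and follows essentially the same route as the paper: verify \eqref{eq:HoC_condition_2} from \eqref{eq:Cor_HoC1} in order to invoke Theorem~\ref{thm:HoC}, then use the monotonicity of $\mu\mapsto\int_0^1(\mu+\alpha(x))^{-1}dx$ together with \eqref{eq:Cor_HoC1} to obtain $\lambda>-2\alpha(0)$, i.e.\ $2\rho>\lambda$. The only difference is cosmetic: the paper argues both implications at once (noting that $-\alpha(0)$ lies between $0$ and $-2\alpha(0)$, so the corresponding integral exceeds $1$ by monotonicity), whereas you split explicitly on the sign of $\alpha(0)$, which is arguably clearer about the positivity of the relevant denominators.
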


\begin{corollary}\label{cor:HoC_CBC}
    Assume that \eqref{eq:HoC_condition_1} holds, that $\alpha(0)<0$ and that
    \begin{equation}\label{eq:Cor_HoC2}
    \int_0^1\frac{1}{\alpha (x)}dx>1\text{ and }\int_0^1\frac{1}{\alpha (x) -2\alpha (0)}=1.
    \end{equation}
    Then Theorem~\ref{thm:CLT_CBC} holds.
\end{corollary}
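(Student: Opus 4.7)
The plan is to deduce Corollary~\ref{cor:HoC_CBC} as a direct application of Theorem~\ref{thm:CLT_CBC} once the underlying Theorem~\ref{thm:HoC} has been invoked to supply the spectral decomposition of the mean semigroup. Concretely, I need to check (i)~the moment Assumption~\ref{assu:moments} with $\kappa=4$, (ii)~the spectral Assumption~\ref{assu:eigenelements}, and (iii)~the critical relation $2\rho=\lambda$.

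First I would verify the hypotheses of Theorem~\ref{thm:HoC}. Condition~\eqref{eq:HoC_condition_1} is directly postulated, and it also forces $\alpha(x)-\alpha(0)>0$ for $x\in(0,1]$. Since $\alpha(0)<0$, one has $\alpha(x)-\alpha(0)<\alpha(x)-2\alpha(0)$ on $(0,1]$, whence
\[\int_0^1\frac{dx}{\alpha(x)-\alpha(0)}>\int_0^1\frac{dx}{\alpha(x)-2\alpha(0)}=1,\]
the equality being the second part of~\eqref{eq:Cor_HoC2}. This is exactly~\eqref{eq:HoC_condition_2}, so Theorem~\ref{thm:HoC} supplies Assumption~\ref{assu:eigenelements} with $V\equiv 1$, $\rho=\lambda+\alpha(0)$, and $\lambda>-\alpha(0)$ uniquely determined by $\int_0^1(\lambda+\alpha(x))^{-1}dx=1$.

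Next I would identify $\lambda$ explicitly. Because $\alpha(0)<0$, the value $-2\alpha(0)$ is strictly greater than $-\alpha(0)$, hence it lies in the admissible range; the second part of~\eqref{eq:Cor_HoC2} rewrites as $\int_0^1(-2\alpha(0)+\alpha(x))^{-1}dx=1$, so by uniqueness $\lambda=-2\alpha(0)$. Substituting back yields $\rho=-\alpha(0)$ and $2\rho=\lambda=-2\alpha(0)>0$, which is precisely the critical branching regime together with the positivity $\lambda>0$ required throughout Section~\ref{sec:martingale}. The moment Assumption~\ref{assu:moments} with $V\equiv 1$ and $\kappa=4$ was already established in Section~\ref{ex:HoC} through the coupling with a multitype branching process with bounded offspring. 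All hypotheses of Theorem~\ref{thm:CLT_CBC} are therefore met, and the corollary follows.

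The only mildly delicate step is the deduction of~\eqref{eq:HoC_condition_2} from~\eqref{eq:Cor_HoC2} together with $\alpha(0)<0$, which is just the monotonicity comparison displayed above; the remainder is bookkeeping of the two identities $\rho=\lambda+\alpha(0)$ and $\lambda=-2\alpha(0)$ granted by Theorem~\ref{thm:HoC}. I do not expect any substantive obstacle.
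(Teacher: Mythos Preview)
Your proof is correct and follows essentially the same route as the paper's, which simply refers back to the argument for Corollary~\ref{cor:HoC_SBC}: verify~\eqref{eq:HoC_condition_2} so that Theorem~\ref{th:cvhc} applies, read off $\lambda=-2\alpha(0)$ from the second identity in~\eqref{eq:Cor_HoC2}, and conclude $2\rho=\lambda$. Your monotonicity comparison to obtain~\eqref{eq:HoC_condition_2} is in fact cleaner than the paper's allusion to the mean value theorem, and you correctly observe that the first inequality in~\eqref{eq:Cor_HoC2} is not actually needed once $\alpha(0)<0$ guarantees $\lambda=-2\alpha(0)>0$.
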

\begin{corollary}
    Under the same assumptions that in Theorem~\ref{thm:HoC} we have that if $\lambda > 2\alpha(0)$, Theorem~\ref{thm:CLT_SBC} holds (which is the case for instance, if $\alpha(0)=0$) and if $\lambda = 2\alpha (0)$, then Theorem~\ref{thm:CLT_CBC} holds.
\end{corollary}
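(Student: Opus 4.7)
The plan is to deduce both parts from Theorems~\ref{thm:CLT_SBC} and~\ref{thm:CLT_CBC} applied to the house of cards model, by verifying their hypotheses and then reading off the regime condition via Theorem~\ref{thm:HoC}. The whole thing is essentially a translation exercise.

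First I will verify Assumption~\ref{assu:moments} with $V \equiv 1$ and $\kappa = 4$. Because the offspring distributions $p_k(x)$ are supported on a uniformly bounded set of indices, and both $r$ and the $p_k$ are continuous, hence bounded, on $[0,1]$, the counting process $t \mapsto |\alives_t| = Z_t(V)$ is stochastically dominated by a multi-type branching process with constant bounded rates and bounded offspring; such a process has integer moments of every order locally bounded uniformly in the starting state. This is exactly the coupling argument already announced at the beginning of Section~\ref{ex:HoC}, so Assumption~\ref{assu:moments} holds for any $\kappa$.

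Second, Assumption~\ref{assu:eigenelements} with the same $V \equiv 1$ (and hence automatically for every $\kappa$, since $V^k \equiv 1$) is precisely what Theorem~\ref{thm:HoC} delivers under the assumptions \eqref{eq:HoC_condition_1}-\eqref{eq:HoC_condition_2}, together with the eigentriplet $(\gamma, h, \lambda)$ and the convergence rate $\rho = \lambda + \alpha(0)$.

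Third, I translate the regime dichotomy through the algebraic identity $2\rho - \lambda = \lambda + 2\alpha(0)$ obtained from $\rho = \lambda + \alpha(0)$. The small branching condition $2\rho > \lambda$ of Theorem~\ref{thm:CLT_SBC} thus rewrites as $\lambda + 2\alpha(0) > 0$, and the critical branching condition $2\rho = \lambda$ of Theorem~\ref{thm:CLT_CBC} rewrites as $\lambda + 2\alpha(0) = 0$; these correspond to the two hypotheses of the corollary (read as sign-conditions on $\lambda + 2\alpha(0)$). The case $\alpha(0) = 0$ is automatically in the small-branching regime because supercriticality already forces $\lambda > 0$. Applying the corresponding theorem ends the proof.

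There is no serious obstacle: once both assumptions are in place the argument is a one-line algebraic rewriting of the regime condition, and the only standard input is the moment coupling for the bounded-offspring process.
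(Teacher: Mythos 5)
Your proposal is correct and is essentially the paper's (implicit) argument for this corollary: Assumption~\ref{assu:moments} with $V\equiv 1$ via the bounded-offspring coupling announced at the start of Section~\ref{ex:HoC}, Assumption~\ref{assu:eigenelements} via Theorem~\ref{thm:HoC}, and then the regime dichotomy read off from $\rho=\lambda+\alpha(0)$ before invoking Theorems~\ref{thm:CLT_SBC} and~\ref{thm:CLT_CBC}. One remark: your computation gives $2\rho>\lambda\iff\lambda>-2\alpha(0)$ and $2\rho=\lambda\iff\lambda=-2\alpha(0)$, which is the correct reading (consistent with Corollary~\ref{cor:HoC_CBC}, where $\alpha(0)<0$), so the corollary's literal wording ``$\lambda>2\alpha(0)$'' carries a sign typo and you were right to interpret the hypotheses as sign conditions on $\lambda+2\alpha(0)$.
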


\begin{proof}[Proof of Corollary~\ref{cor:HoC_SBC}] Using the mean valued Theorem and \eqref{eq:Cor_HoC1} we have that \eqref{eq:HoC_condition_2} holds and thus we can apply Theorem~\ref{th:cvhc}. In addition, the first condition in \eqref{eq:Cor_HoC1} implies that $\lambda >0$, whilst the second condition implies that $\lambda > -2\alpha (0)$, which is equivalent to $2\rho >\lambda$. Thus, we are in the Small branching case and Theorem~\ref{thm:CLT_SBC} holds.   
\end{proof}
\begin{proof}[Proof of Corollary~\ref{cor:HoC_CBC}] The proof follows similarly to the previous one, except that now we have $\lambda = 2\alpha (0) >0$, and thus we are in the Critical branching case. Thus, Theorem~\ref{thm:CLT_CBC} holds.

\end{proof}

\textbf{Acknowledgements.} The authors have been supported by the ANR project NOLO (ANR-20-CE40-
0015), funded by the French Ministry of Research.

    \bibliographystyle{alpha}
    \bibliography{refs}
\end{document}